\title{Constructive Method for Finding the Coefficients of a Divided Symmetrization}
\author{Nate Ince }
\date{}
\newtheorem{theorem}{Theorem}[section]
\newtheorem*{theorem*}{Theorem}
\newtheorem{proposition}[theorem]{Proposition}
\newtheorem{lemma}[theorem]{Lemma}
\newtheorem{corollary}[theorem]{Corollary}
\begin{document}
\maketitle
\begin{abstract}
We consider a type of divided symmetrization  $\overrightarrow{D}_{\lambda,G}$ where $\lambda$ is a nonincreasing partition on $n$ and where $G$ is a graph. We discover that in the case where $\lambda$ is a hook shape partition with first part equal to 2, we may determine the expansion of $\overrightarrow{D}_{\lambda,G}$ over the basis of Schur functions. We show a  combinatorial construction for finding the terms of the expansion and a second construction that allows computation of the coefficients. 
\end{abstract}

\section{Introduction}

Given a function $f$ on the variables $x_1, \dots, x_n$, the divided symmetrization of $f$ is \[ DS(f)=\sum_{\delta \in S_n} \frac{f(x_{\delta(1)}, \dots, x_{\delta(n)})}{\prod_{(i,j) \in E_n} (x_{\delta(i)}-x_{\delta(j)}) } \]

where $S_n$ is the symmetric group on $n$ and $E_n = \{ (i, j): i<j, i,j \in [n]\}$. 

A common example of a divided symmetrization is the function \[D_{\lambda, G} = \sum_{\delta \in S_n} \frac{x_{\delta(1)}^{\lambda_1} \dots x_{\delta(n)}^{\lambda_n} }{\prod_{(i,j) \in E(G)} (x_{\delta(i)}-x_{\delta(j)}) } \] where $G$ is a graph on $[n]$ whose edges are pairs $(i,j)$ with $i < j$. If $\lambda_1 + \cdots + \lambda_n =n-1$, $\lambda_1 \geq \cdots \geq \lambda_n \geq 0$, $P$ is the path graph with edges $(1,2), (2,3), \dots, (n-1,n)$, and $T$ is any tree on $[n]$, then $D_{\lambda, P}$ and $D_{\lambda, T}$ evaluate to constants. Alexander Postnikov finds a combinatorial interpretation to the evaluation of the constant that $D_{\lambda, P}$ evaluates to in \cite{postbook}, Theorem 3.2, and also connects $D_{\lambda, P}$ to polytope theory. Additionally, Petrov finds a combinatorial interpretation of the constant that $D_{\lambda, T}$ evaluates to in \cite{petbook}. Other divided symmetrizations are studied and evaluated in \cite{compute} 

The main consideration of this paper will be the divided symmetrizations $\overrightarrow{D}_{\lambda, G} $, which are defined as $\overrightarrow{D}_{\lambda, G} = D_{\lambda+o^{E(G)}, G}$ for $\lambda_1 + \cdots + \lambda_n =n$, $\lambda_1 \geq \cdots \geq \lambda_n$, for $G$ with $E(P) \subseteq E(G)$, and with $o^E$ defined as the  integer vector with $o^E_i = |\{(i,j) \in E\}|$. For instance if $G$ is the graph that contains edges $(1,2)$, $(2,3)$, $(3,4)$, $(4,5)$, and $(2,5)$, then $o^{E(G)}=(1,2,1,1,0)$.  These divided symmetrizations are a form of the ones discussed in \cite{postbook} Chapter 4. 

$\overrightarrow{D}_{\lambda, G}$ does not evaluate to a constant most of the time. However, we can still aim to find the coefficients of the divided symmetrization's expansion into symmetric monomials or over some other basis for symmetric polynomials, and we can seek a combinatorial formula for these coefficients. 

We say $\lambda \vdash n$ if $\lambda_1 + \cdots + \lambda_n =n$ and $\lambda_1 \geq \cdots \geq \lambda_n$. Represent $x_{\delta(1)}^{y_1} \cdots x_{\delta(n)}^{y_n}$ as $x_\delta^y$ and define $K_{\lambda,\mu}$ as the Kostka numbers and $m_y(x)$ as the symmetric monomial $\sum_{\delta \in S_n} x_\delta^y$. In \cite{postbook} Theorem 4.3, it is found that $\overrightarrow{D}_{\lambda, P}= \sum_{\substack{\mu \vdash n \\ K_{\lambda, \mu} \neq 0}} m_\mu(x) $, but no other cases are solved.

To classify $\overrightarrow{D}_{\lambda, G}$ in general, we first may rewrite the symmetric division form of $\overrightarrow{D}_{\lambda,G}(x)$ as follows:

\begin{align*}
\overrightarrow{D}_{\lambda,G}(x) &= \sum_{\delta\in S_n} \frac{ x_\delta^{\lambda+o^G}}{\prod_{(i,j) \in E(G)} (x_{\sigma(i)} - x_{\sigma(j)} ) } \\
&= \sum_{\delta\in S_n} \frac{ x_\delta^{\lambda+o^G}\prod_{(i,j) \in E_n- E(G)} (x_{\delta(i)}-x_{\delta(j)})}{\prod_{(i,j) \in E_n} (x_{\sigma(i)} - x_{\sigma(j)} )}  \\
&= \sum_{\delta\in S_n} \frac{(-1)^{\text{sign}(\delta)} x_\delta^{\lambda+o^G}\prod_{(i,j) \in E_n- E(G)} (x_{\delta(i)}-x_{\delta(j)})}{\prod_{(i,j) \in E_n} (x_{i} - x_{j} )}  \\
\end{align*}

For any $t \in \mathbb{Z}^n$ with all $t_i \geq 0$, let $l(t) \in \mathbb{Z}^n$ be defined as $l_i(t) = t_{\mu(i)} - (n-i)$ where $\mu \in S_n$ so that $t_{\mu(1)} > t_{\mu(2)} > \cdots > t_{\mu(n)}$.  If $x^t$ appears in the expansion of $x_\delta^{\lambda+o^{E(G)}}\prod_{(i,j) \in E_n- E(G)} (x_{\delta(i)}-x_{\delta(j)})$, then 
\[
\sum_{\delta \in S_n} \frac{(-1)^{\text{sign}(\delta)} x_\delta^t}{\prod_{i,j \in [n], i<j} (x_{i} - x_{j})} 
\]
is a term in the expansion of  $\overrightarrow{D}_{\lambda,G}(x)$. This expression evaluates to $0$ if $t_i=t_j$ for some $i,j \in [n]$ and otherwise evaluates the Schur function $s_{l(t)}(x)$.

Therefore if $\mathcal{N}_{\lambda, E}$  is the set of $t \in \mathbb{Z}^n_{\geq 0}$ so that there is a term $c_t x^t$, $c_t \neq 0$ in the expansion of $x_\delta^{\lambda+o^E}\prod_{(i,j) \in E_n- E} (x_{\delta(i)}-x_{\delta(j)})$ and so that there is no $i,j \in [n], i \neq j$ so that $t_i=t_j$, then:
 
\begin{align}
\overrightarrow{D}_{\lambda,G}(x) &= \sum_{t \in \mathcal{N}_{\lambda, E_n-E(G)}} \left( \sum_{l(t)=\lambda} c_t \right) s_{\lambda} \label{eqcoeff}
\end{align}

Therefore if we can determine the set $\mathcal{N}_{\lambda, E_n-E(G)}$ and find a combinatorial formula for the nonzero coefficients $c_t$ of the expansion, we may classify the evaluation of $\overrightarrow{D}_{\lambda,G}$ in a similar manner to the other divided symmetrizations mentioned earlier. 

Note that $\mathcal{N}_{\lambda, E_n-E(G)} \subseteq \mathcal{N}_{\lambda, E_n}$. Despite 
$\mathcal{N}_{\lambda, E_n}$ not really corresponding to the terms of one of our divided symmetrizations, finding its contents will be important to our approach. Section 2 will allow us to view the contents of $\mathcal{N}_{\lambda, E_n}$ combinatorially and will show why we concern ourselves specifically with the case $\lambda= (2,1,1,\dots,1,0)$. In our main results, we will find our classification of the coefficients for $\overrightarrow{D}_{\lambda,G}$ where $\lambda$ is a 2-hook and $G$ is any path with edges. For that purpose, we first find a construction for $\mathcal{N}_{\lambda, E_n}$ for $\lambda$ a 2-hook in Section 3. Then in Section 4 we find a construction for a set that allows us a combinatorial interpretation of the coefficients $c_t$ of $\overrightarrow{D}_{\lambda,G}$.  Section 5 considers the enumeration of $|\mathcal{N}_{\lambda, E_n}|$.

\section{Combinatorial Structure of $\mathcal{N}_{\lambda, G}$}

In order to have proper language to classify when an integer $n$-tuple $t$ is a member of $\mathcal{N}_{\lambda, E_n-E(G)}$, we redefine the problem as follows. Let $\omega^n = (n-1, n-2, \dots, 0)$. For any $e=(a,b) \in E_n$, let $v(e)$ be the vector $w \in \mathbb{Z}^n$, where \[w_i=\begin{cases}-1 &(i=1) \\ 1 &(i=b) \\ 0 &(i \neq a,b) \end{cases} .\] We say $w$ \emph{starts at index $a$} and \emph{ends at index $b$}.

For $E \subseteq E_n$, let $v(E)= \sum_{e \in E} v(e)$. A tuple $t$ then only appear in the expansion of\\ $x^{\lambda+o^{E(G)}}\prod_{(i,j) \in E_n- E(G)} (x_{i}-x_{j})$ if and only if there is some $E \subseteq E_n-E(G)$ so that $\lambda+\omega^n+v(E')=t$, as including the pair $(i,j) \in E$ corresponds to taking $-x_j$ from the term $(x_i-x_j)$ when expanding the product $\prod_{(i,j) \in E_n- E(G)} (x_{i}-x_{j})$.  The sign the term will have is $(-1)^{|E|}$, which gives us a description for $c_t$

\begin{lemma}\label{lctformula}
For any $\lambda \vdash n$ and for any $G$ a path on $n$ with edges, the coefficients $c_t$ of \ref{eqcoeff} have the form:
\[
c_t = \sum_{\substack{E\subseteq E_n-E(G) \\ \lambda+\omega^n+v(E)=t}} (-1)^{|E|}
\] 
\end{lemma}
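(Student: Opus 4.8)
The plan is to expand the product $\prod_{(i,j) \in E_n - E(G)}(x_i - x_j)$ term by term, track the exponent vector and the sign of each resulting monomial, and then collect like terms to read off $c_t$. Most of the work is already sketched in the paragraph preceding the statement; the proof is a matter of formalizing that bookkeeping.

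First I would set up the bijection between the terms of the distributed product and subsets of the edge set $E_n - E(G)$. Distributing the product amounts to choosing, independently for each factor $(x_i - x_j)$, either the summand $x_i$ or the summand $-x_j$. Encoding each choice by the subset $E \subseteq E_n - E(G)$ consisting of exactly those factors from which the negative summand $-x_j$ is taken gives a bijection between the monomials appearing in the expansion (before collecting like terms) and subsets $E$. Since a factor of $-1$ is picked up precisely once for each edge in $E$, the monomial indexed by $E$ carries the sign $(-1)^{|E|}$.

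Next I would compute the exponent vector of the monomial associated to $E$. Taking $x_i$ from every factor yields the exponent vector $o^{E_n - E(G)}$ by the definition of $o^{\bullet}$. Replacing the choice $x_i$ by $-x_j$ on an edge $(i,j) \in E$ lowers the exponent of $x_i$ by one and raises the exponent of $x_j$ by one, which is exactly the vector $v((i,j))$; summing over $E$, the exponent of the product becomes $o^{E_n - E(G)} + v(E)$. Multiplying by the prefactor $x^{\lambda + o^{E(G)}}$ shifts this to $\lambda + o^{E(G)} + o^{E_n - E(G)} + v(E)$. The key simplification is the identity $o^{E(G)} + o^{E_n - E(G)} = o^{E_n} = \omega^n$: because $E(G) \subseteq E_n$, the sets $E(G)$ and $E_n - E(G)$ partition $E_n$, so their counting functions add, and $o^{E_n}_i = |\{j : i < j \leq n\}| = n - i$ recovers $\omega^n = (n-1, \dots, 1, 0)$. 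Hence the term indexed by $E$ is $(-1)^{|E|} x^{\lambda + \omega^n + v(E)}$, and collecting all terms whose exponent equals a fixed $t$ gives $c_t = \sum_{E \subseteq E_n - E(G),\ \lambda + \omega^n + v(E) = t} (-1)^{|E|}$.

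I do not expect a serious obstacle, as the argument is essentially combinatorial bookkeeping. The one point requiring genuine care is verifying that $E \mapsto$ monomial is a bijection onto the distributed terms, so that the signed collection of exponents is complete and no contribution is dropped before like exponents are gathered; the corresponding cancellations among equal exponents are exactly what the signed sum defining $c_t$ records. A secondary check is that the partition identity for $o$ holds exactly, which relies on $E(G) \subseteq E_n$ under the standing hypothesis that $G$ is a path with extra edges.
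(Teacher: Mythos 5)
Your proposal is correct and follows essentially the same route as the paper, which proves this lemma via the informal bookkeeping in the paragraph preceding its statement: each subset $E \subseteq E_n - E(G)$ encodes the factors from which $-x_j$ is taken, contributing sign $(-1)^{|E|}$ and exponent $\lambda + \omega^n + v(E)$. Your explicit verification of the identity $o^{E(G)} + o^{E_n - E(G)} = o^{E_n} = \omega^n$ is a detail the paper leaves implicit, but it is the same argument.
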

Therefore, let us call $E \subseteq E_n-E(G)$ a \emph{justifying pair set} for $t$ if $\lambda+\omega^n+v(E)=t$. An integer $n$-tuple $t$ is in $\mathcal{N}_{\lambda,E_n-E(G)}$ 
if and only if at least one justifying pair set exists for $t$ and $t$ has all distinct terms.

To find all $t \in \mathcal{N}_{\lambda,E_n-E(G)}$, we will examine the possible integer $n$-tuples $l(t)$ can be. One might wonder if it might be possible to have $t \in \mathcal{N}_{\lambda,E_n}$ with $l(t)=\lambda$, justified by some nonempty $E \subseteq E_n$. Our first minor result will be to show that it is not. 

\begin{lemma}\label{selflemma}
For any $n$-tuple $x$ with $x_1 > x_2 > \cdots > x_n > 0$, if $x + v(E)=x_\delta$ for some $\delta \in S_n$, then $\delta$ is the identity permutation and $E=\emptyset$.
\end{lemma}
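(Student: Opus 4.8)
The plan is to replace the vector identity $x+v(E)=x_\delta$ by a family of scalar inequalities indexed by the partial sums and then use the rearrangement inequality to collapse all of them to equalities. For $y\in\mathbb{Z}^n$ write $P_k(y)=\sum_{i=1}^{k}y_i$. Applying $P_k$ to $x+v(E)=x_\delta$ yields $P_k(x_\delta)=P_k(x)+P_k(v(E))$ for each $k$, so it suffices to control $P_k(v(E))$ and $P_k(x_\delta)$ separately and then show the two controls are incompatible unless $\delta=\mathrm{id}$ and $E=\emptyset$.

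First I would analyze the prefix sums of $v(E)$. Each generator $v(e)$ with $e=(a,b)$, $a<b$, carries a single $+1$ and a single $-1$ at the two endpoints of the edge, so $P_k(v(e))=0$ unless $a\le k<b$, on which interval it equals one fixed value; summing over $e\in E$, every $P_k(v(E))$ therefore has the same sign, and $P_k(v(E))=0$ holding for all $k$ forces each edge to contribute $0$ to every prefix, which is impossible for a genuine edge (where $a\neq b$) and hence forces $E=\emptyset$. Second, since $x_1>\cdots>x_n$, the vector $x$ is already its own decreasing rearrangement, so by the rearrangement inequality (the decreasing arrangement majorizes all its permutations) the partial sums of $x$ dominate those of any $x_\delta$, with simultaneous equality in all $k$ exactly when $x_\delta=x$.

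Combining the two, the identity $P_k(x_\delta)=P_k(x)+P_k(v(E))$ sandwiches $P_k(x_\delta)$ against $P_k(x)$ from both sides and forces $P_k(v(E))=0$ and $P_k(x_\delta)=P_k(x)$ for every $k$; the former gives $E=\emptyset$ by the analysis above, and the latter gives $x_\delta=x$ coordinatewise (subtract consecutive partial sums), whence $\delta=\mathrm{id}$ because the entries of $x$ are distinct. The step I expect to be the crux is the sign bookkeeping that makes the sandwich actually close: one must check that the common sign of $P_k(v(E))$ coming from the orientation $a<b$ pushes $P_k(x_\delta)$ to the side opposite the majorization bound, so that the two one-sided inequalities oppose rather than reinforce one another. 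An equivalent and perhaps cleaner finish routes everything through the single linear functional $\phi(y)=\sum_{i=1}^{n}i\,y_i$: it is extremized over the rearrangements of $x$ precisely at $\delta=\mathrm{id}$, while $\phi(v(E))=\sum_{(a,b)\in E}(b-a)$ is a sum of nonzero terms of one sign, so equating $\phi(x_\delta)=\phi(x)+\phi(v(E))$ with that extremal value forces $E=\emptyset$ and $\delta=\mathrm{id}$ simultaneously. The hypothesis $x_n>0$ plays no role in this vector computation and only records that $x$ arises as a tuple of monomial exponents.
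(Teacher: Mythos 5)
Your argument breaks down at exactly the step you flagged as the crux, and the check does not go your way: the two one-sided bounds \emph{reinforce} rather than oppose each other. With the paper's orientation, an edge $e=(a,b)$ with $a<b$ has $v_a(e)=-1$ and $v_b(e)=+1$, so $P_k(v(e))=-1$ for $a\le k<b$ and $0$ otherwise; hence $P_k(v(E))\le 0$ for every $k$, and the identity $P_k(x_\delta)=P_k(x)+P_k(v(E))$ yields $P_k(x_\delta)\le P_k(x)$ --- which is the \emph{same} inequality that majorization already gives, since the decreasing arrangement maximizes prefix sums. Your $\phi$ variant has the same defect: $\phi(v(E))=\sum_{(a,b)\in E}(b-a)\ge 0$, but the rearrangement inequality says the identity pairing (increasing weights $i$ against decreasing entries $x_i$) \emph{minimizes} $\phi$, so $\phi(x_\delta)\ge\phi(x)$, again the same direction as $\phi(x_\delta)=\phi(x)+\phi(v(E))\ge\phi(x)$. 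Nothing is sandwiched and nothing is forced. This is not a repairable bookkeeping issue: in the generality in which you (and, literally, the lemma) allow arbitrary $E\subseteq E_n$, the statement is false. Take $x=(2,1)$, $E=\{(1,2)\}$, $\delta$ the transposition: then $x+v(E)=(1,2)=x_\delta$ with $E\neq\emptyset$ and $\delta\neq\mathrm{id}$, and all of your identities hold consistently. Any proof by permutation-symmetric linear functionals of $x+v(E)$ is doomed by this example, because $v(E)$ drifts mass toward later indices, which is precisely the direction in which non-identity rearrangements of a decreasing tuple also move.

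The missing ingredient is a \emph{local, per-coordinate capacity bound}, which is what the paper's proof uses. Coordinate $k$ can increase by at most the number of pairs ending at index $k$; in every context where the lemma is applied, justifying pair sets lie in $E_n-E(G)$ with $E(P)\subseteq E(G)$, so the consecutive pair $(k-1,k)$ is unavailable and this count is $k-2$ (this exclusion is exactly what kills the $x=(2,1)$ example, and it is invisible to your global functionals). If $\delta(k)=1$ for some $k>1$, coordinate $k$ would have to gain $x_1-x_k\ge k-1>k-2$, a contradiction; hence $\delta(1)=1$, which forces $v_1(E)=0$ and thus no pair starting at index $1$ (nothing ends there to compensate). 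Deleting the first coordinate and inducting on $n$ then finishes both conclusions at once. So the correct mechanism is coordinatewise counting plus induction, not prefix sums or rearrangement: the per-index bound distinguishes \emph{which} pairs can end where, information that is destroyed when you collapse $v(E)$ to any single symmetric linear statistic.
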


The proof of this lemma will work on a principle that the difference between $x_k+v_k(E)$ and $x_k$ can be at most $k-2$ because at most $k-2$ pairs in $E_n$ end at index $k$, which is an argument we will use repeatedly throughout this paper.

\begin{proof}

We go by induction on $n$. For $n=1$, the lemma's conditions hold trivially. Assume the lemma holds for $n-1$-tuples, and that there is an $n$-tuple $x$ with $x_1 > x_2 > \cdots > x_n > 0$, where there is some $E$ so that $x + v(E)=x_\delta$. 

Suppose that $\delta(k)=1$. Consider that $x_1-x_k > k-1$, while only $k-2$ pairs in $E_n$ exist that end at index $k$. Therefore this is not a possibility. If instead $\delta(1)=1$, then let $x'=(x_2, \dots, x_n)$. Let $\delta' \in S_{n-1}$ be defined as $\delta'(i) = \delta(i+1)$ for $i \in [n-1]$, and let $E'=\{(i,j)| (i+1,j+1) \in E, i,j \in [n-1]\}$. Then $x' + v(E')=x'_{\delta'}$. Applying inductive hypothesis shows $\delta'$ is the identity of $S_{n-1}$, which shows $\delta$ itself is the identity of $S_n$. Therefore the induction is complete. 
\end{proof}

Recall that the partition $\lambda$ is an \emph{$x$-hook shape} or a \emph{hook shape of length $n-x$} if $\lambda_1=x$ and $\lambda_i=1$ for $i \in \{2,\dots,n-x+1\}$, and $\lambda_i=0$ for $i \in \{n-x+2,n\}$. As it turns out, 2-hooks have special properties that allow us to determine everything in the expansion of $\overrightarrow{D}_{\lambda,G}$ that no other shape allows. Even though we are viewing partitions as nonnegative $n$-tuples, we refer to the number of nonzero parts as the \emph{length} of the partition. 

Using a similar idea to the last proof, we can limit our search for $t \in \mathcal{N}_{\lambda, E_n}$ to when $\lambda$ is a hook shape.

\begin{lemma}\label{hooklemma}
If $\lambda$ is a hook-shaped partition and $t\in \mathcal{N}_{\lambda, E_n}$, $t \neq \lambda+\omega^n$, then $l(t)$ must be a hook-shaped partition at least as long as $\lambda$. Furthermore, $t_1>t_i$ for $i \in \{2,\dots,n\}$. 
\end{lemma}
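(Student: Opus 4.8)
The plan is to recast all three assertions as statements about the multiset of entries of $t$ and then control that multiset with a degree count. By Lemma~\ref{lctformula}, every $t \in \mathcal{N}_{\lambda, E_n}$ has the form $t = \lambda + \omega^n + v(E)$ for a justifying pair set $E \subseteq E_n$, and $l(t)$ depends only on the value set $\{t_1,\dots,t_n\}$ through its beta-set $\{\,l(t)_i + (n-i)\,\}$. Since each $v(e)$ has entries summing to $0$, the sum $\sum_i t_i$ is constant on $\mathcal{N}_{\lambda,E_n}$, so every value set occurring is $\{0,1,\dots,n-1\}$ with some entries deleted and an equal number of entries $\ge n$ inserted. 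With this I would record: (i) $l(t)$ is a hook exactly when precisely one entry $\ge n$ occurs, and then the sum constraint forces the inserted value to be $a+n$ for the single deleted value $a$; (ii) writing $\lambda$ as the $x$-hook, ``at least as long as $\lambda$'' becomes $a \le x-1$ (the hook then has leg length $n-1-a \ge n-x$); and (iii) ``$t_1 > t_i$ for all $i\ge 2$'' becomes the statement that the unique value $\ge n$ sits in coordinate $1$.

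The main tool is the counting principle already used for Lemma~\ref{selflemma}: for each $k$ we have $t_k = (\lambda+\omega^n)_k + v(E)_k \le (\lambda+\omega^n)_k + |\{(j,k)\in E\}|$, and at most $k-1$ pairs end at index $k$. Evaluating $(\lambda+\omega^n)_k + (k-1)$ for the $x$-hook gives $n+x-1$ at $k=1$, gives $n$ for $2 \le k \le n-x+1$, and gives $n-1$ for $k > n-x+1$. Hence an entry $\ge n$ can occur only in coordinate $1$ or in a coordinate $2 \le k \le n-x+1$ whose in-degree is maximal; and attaining the value $n$ at such a $k$ forces all of $(1,k),(2,k),\dots,(k-1,k)$ into $E$, in particular $(1,k)$, which strictly lowers $t_1$ below $n+x-1$.

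From here I would assemble the conclusions. Any coordinate $k \ge 2$ that reaches a value $\ge n$ must in fact equal $n$ (its maximum from the previous paragraph), so distinctness of the $t_i$ immediately allows at most one such coordinate. The remaining and principal task is to control coordinate $1$: when some $k \ge 2$ attains $t_k = n$, the forced pair $(1,k)\in E$ drains $t_1$, and I must show this drainage keeps $t_1$ from also landing $\ge n$, which would break the hook shape, while conversely, when no coordinate $k\ge 2$ reaches $n$, coordinate $1$ carries the unique value $\ge n$; together these would give the hook shape (i) and the maximality (iii). Quantitatively this reduces to comparing the out-degree spent at vertex $1$ against the surplus $x-1$ by which $(\lambda+\omega^n)_1 = n+x-1$ exceeds $n$, all under the distinctness constraint. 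For (ii) I would track the deleted value $a$, which equals $x-1$ at $E=\emptyset$ (the excluded case $t=\lambda+\omega^n$), and argue that no justifying $E$ can raise it, so $a \le x-1$. The step I expect to be the crux is exactly this bookkeeping in coordinate $1$ — reconciling how far $t_1$ can be drained with how many other coordinates can simultaneously reach their maxima while all entries remain distinct — since that is the delicate interplay between the arm length $x$ and the surplus $n+x-1$ at position $1$ from which both the hook constraint and the maximality of $t_1$ must be extracted.
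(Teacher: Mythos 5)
Your reduction to value sets and beta-sets, and your observations (i)--(iii), are sound and in fact parallel the paper's own argument; but the proposal stops exactly at the decisive step, which you yourself flag as ``the crux'' and leave as bookkeeping to be done --- and under your counting convention that bookkeeping cannot succeed, because the clause you are trying to prove is then false. You count up to $k-1$ pairs ending at index $k$, i.e.\ you allow the path edges $(k-1,k)$ --- in particular $(1,2)$ --- inside a justifying pair set. Take $n=5$, $\lambda=\lambda^5=(2,1,1,1,0)$, and $E=\{(1,2),(1,3),(3,5)\}$: then $\lambda+\omega^5+v(E)=(4,5,3,2,1)$, whose entries are distinct, so $t=(4,5,3,2,1)$ is justified and qualifies for $\mathcal{N}_{\lambda,E_n}$ under your convention, with $l(t)=(1,1,1,1,1)$ a hook, yet $t_2=5>t_1=4$, contradicting $t_1>t_i$. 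So the case ``$t_k=n$ for some $k\geq 2$'' that your outline defers is not an eliminable nuisance: under your reading it genuinely occurs, and no comparison of the out-degree spent at vertex $1$ against the surplus $x-1$ will rescue the conclusion.

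The idea you are missing is the paper's standing convention that justifying pair sets avoid the path edges: since $E(P)\subseteq E(G)$ throughout, the relevant $E$ satisfy $E\subseteq E_n-E(P)$, so at most $k-2$ pairs end at index $k$ --- this is exactly the count announced before Lemma \ref{selflemma} and used again in Corollary \ref{l2hform}, whose bounds $2\leq t_i\leq n-1$ are false if path edges are admitted. With that count every coordinate $k\geq 2$ satisfies $t_k\leq (n+1-k)+(k-2)=n-1$, and the difficulty you identified evaporates: since $\sum_i t_i=n+\binom{n}{2}>0+1+\cdots+(n-1)$ and the entries are distinct, $t_1\geq n$, so $t_1$ is the strict maximum (your (iii)); the second-largest entry must then be exactly $n-1$, since otherwise $\{t_2,\dots,t_n\}=\{0,\dots,n-2\}$ and the sum forces $t_1=2n-1>\lambda_1+\omega^n_1$ unless $\lambda_1=n$, a degenerate case that collapses to the excluded $t=\lambda+\omega^n$ by the argument of Lemma \ref{selflemma}; hence $l_2(t)=1$ and $l(t)$ is a hook (your (i)); and since no pair ends at index $1$, $v_1(E)\leq 0$, so $l_1(t)=t_1-(n-1)\leq\lambda_1$, giving ``at least as long'' directly (your (ii), with no need to argue separately that the deleted value cannot rise). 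This is in substance the paper's proof; your skeleton agrees with it, and the gap is the unstated path-edge exclusion together with the deferred coordinate-$1$ analysis that depends on it.
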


\begin{proof}Assume that $\lambda$ is any hook shape. For every index $i \in \{2, n\}$, $\lambda_i+\omega^n_i$  has value $n+1-i$ and can only take from at most $i-2$ previous values, so for any pair set $E$, $(\lambda_i+\omega^n_i+v_i(E)) \geq n-1$. Since $\lambda_i+\omega^n_i+v_i(E)$ must take on $n-1$ different values for $i \in \{2,n\}$, $\lambda_i+\omega^n_i+v_i(E)=n-1$ for some index $i$. Therefore for any pair set $E$, if $\lambda+\omega^n+v(E) \in \mathcal{N}_{\lambda, E_n}$ then $l(\lambda_2+\omega^n_2+v_2(E))=1$, which means $l(\lambda+\omega^n+v(E))$ must be a hook shape. There are no pairs that end at index 1, so $\lambda_1+\omega_1^n \leq \lambda_1+\omega_1^n + v_1(E)$ for any $E \subseteq E_n$, and therefore $l(\lambda+\omega^n+v(E))$ must be a hook shape at least as long as $\lambda$. 
\end{proof}

Let $\lambda^n$ be the hook-shaped partition $(2,1,1,\dots,1,0) \in \mathbb{Z}^n$, which we call the 2-hook of length $n$. According to \ref{hooklemma}, if $t \in \mathcal{N}_{\lambda, E_n}$ then $l(t)=(1,1,\dots,1)$ or $(2,1,\dots,1,0)$, and in the latter case $t=\lambda + \omega^n$ by \ref{selflemma}. We also note that $\lambda_i+\omega^n_i=n+1-i$ for $1<i<n$, and there are $i-2$ pairs in $E_n$ that end at index $i$ and $n-1-i$ pairs in $E_n$ that begin at index $i$. These facts mean that $2 \leq t_i+v_i(E) \leq n-1$ for any $E \subseteq E_n$ and for $1<i<n$, giving us the following corollary. 

\begin{corollary}\label{l2hform}
If $t\in \mathcal{N}_{\lambda^n, E_n-E(G)}$, then either $t=\lambda^n$ or $t_1, \dots, t_n$ are a permutation of $[n]$ with  $t_1=n$ and $t_n=1$. 
\end{corollary}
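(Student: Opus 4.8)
The plan is to route $t$ through Lemmas~\ref{hooklemma} and~\ref{selflemma} to reduce to a single case, and then to pin down the two endpoint coordinates by the edge-counting principle highlighted after Lemma~\ref{selflemma}. Since $\mathcal{N}_{\lambda^n, E_n-E(G)} \subseteq \mathcal{N}_{\lambda^n, E_n}$, any such $t$ lies in $\mathcal{N}_{\lambda^n, E_n}$, and the only hook-shaped partitions of $n$ of length at least that of $\lambda^n$ are $\lambda^n=(2,1,\dots,1,0)$ itself and $(1,1,\dots,1)$. Lemma~\ref{hooklemma} therefore forces $l(t)\in\{\lambda^n,(1^n)\}$, and in the first subcase Lemma~\ref{selflemma} makes the justifying permutation the identity and $E=\emptyset$, so $t=\lambda^n+\omega^n$; this is the first alternative of the corollary. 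It remains to handle $l(t)=(1^n)$, where the sorted form of $t$ is $(1^n)+\omega^n=(n,n-1,\dots,1)$, i.e. $t$ is a permutation of $[n]$, and to show that in this subcase $t_1=n$ and $t_n=1$.

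For the first endpoint I would invoke the second assertion of Lemma~\ref{hooklemma}: since $l(t)=(1^n)\neq\lambda^n$ we have $t\neq\lambda^n+\omega^n$, so $t_1>t_i$ for every $i\in\{2,\dots,n\}$, and as $t$ is a permutation of $[n]$ this is possible only if $t_1=n$. For the second endpoint, fix a justifying pair set $E\subseteq E_n-E(G)$ with $\lambda^n+\omega^n+v(E)=t$, which exists by the membership criterion for $\mathcal{N}$. Because $G$ contains the path $P$, we have $E\subseteq E_n-E(G)\subseteq E_n-E(P)$, so for each interior index $i$ with $1<i<n$ the path edge $(i,i+1)$ is unavailable and at most $n-1-i$ pairs of $E$ can start at $i$; hence $v_i(E)\geq-(n-1-i)$. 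Since $(\lambda^n+\omega^n)_i=n+1-i$, this yields $t_i\geq 2$ for all interior $i$. Thus the value $1$ of the permutation cannot occupy an interior slot, and since $t_1=n\neq 1$ it is forced into the last coordinate, giving $t_n=1$.

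I expect the only delicate point to be the exclusion of the path edges: it is precisely the hypothesis $E(P)\subseteq E(G)$ that upgrades the trivial estimate $t_i\geq 1$ to $t_i\geq 2$ and so keeps the value $1$ out of the interior coordinates. Were $E$ allowed to range over all of $E_n$, the interior bound would collapse and the endpoints could not be separated, so this hypothesis is carrying the essential weight of the argument. It is also worth recording that the first alternative should read $t=\lambda^n+\omega^n$ rather than $t=\lambda^n$, since every element of $\mathcal{N}$ has pairwise distinct coordinates while $\lambda^n$ repeats the part $1$.
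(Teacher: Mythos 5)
Your proof is correct and takes essentially the same route as the paper: Lemma~\ref{hooklemma} restricts $l(t)$ to the two possible hooks, Lemma~\ref{selflemma} resolves the case $l(t)=\lambda^n$ into $t=\lambda^n+\omega^n$, and counting the pairs available to end/begin at interior indices pins the values $n$ and $1$ to the first and last coordinates. If anything you are more precise than the paper on the one delicate point: the paper's stated counts ($i-2$ pairs ending and $n-1-i$ pairs beginning at index $i$) are really the counts for $E_n-E(P)$ rather than $E_n$, so the exclusion of path edges that you flag as carrying the essential weight is exactly what the paper is implicitly using, and your remark that the first alternative should read $t=\lambda^n+\omega^n$ rather than $t=\lambda^n$ is also correct.
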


Since this shows that for 2-hooks the elements of $\mathcal{N}_{\lambda^n,E_n}$ can be viewed as permutations, we call the elements of $\mathcal{N}_{\lambda^n,E_n}$ \emph{2-hook permutations}. We now can describe how to construct $\mathcal{N}_{\lambda^n,E_n}$.

\section{The 2-hook Construction}

We call an integer $n$-tuple an \emph{initial construction} if it has the form $(n,0,\dots,0)$, and say the integer $n$-tuple $p'$ was \emph{obtained from a placement  of value $h$ on index $k$ from $p$} if $p'_i=p_i$ for $i \in [n]-\{k\}$ and $p'_k=h$. 

The \emph{2-hook construction} is the process that starts with an initial construction $p^{(1)}$ and obtains each $p^{(i)}$ from $p^{(i-1)}$ from a placement of value $n-i+1$ on $p^{(i-1)}$ for an index $k \in [n]$ that is \emph{legal for placement}, ending on $p^{(n)}$. Since the value is known each stage, we may drop it from the terminology as needed. Legality of placement will be described soon. An integer $n$-tuple is a \emph{partial construction} if it can be obtained as some $p^{(k)}$ of a 2-hook construction.

For any integer vector $v \in \mathbb{Z}^n$, let $I(v)=\{i \in [n], v_i \neq 0\}$. If for a partial construction $p$ we have $I(p) = \{a, a+1, \dots, b-1, b\}$ for some $a, b \in [n]$, $1 \leq a \leq b \leq n$, then we say $p$ is \emph{unbroken}. Otherwise, $p$ is \emph{broken}.

We say index $i$ is \emph{legal for placement on $p$} if $i \in [n]-I(p)$ and the following rules hold:

\begin{itemize}
\item If $I(p) \neq [n-1]$:
\begin{itemize}
\item If $p$ is unbroken, we may place on any $i \in [n-1]-I(p)$ 
\item If $p$ is broken we may place only on an $i \in [n-1]-I(p)$ if either $i+1 \in I(p)$ or $i-1 \in I(p)$. 
\end{itemize}
\item If $I(p)=[n-1]$ we place on index $n$.
\end{itemize}

Let's show an example of the 2-hook construction. The initial construction for $n=7$ is the $7$-tuple $p^{(0)}=(7,0,0,0,0,0,0)$. Since $I(p^{(0)})=\{1\}$, $p^{(0)}$ is unbroken. To obtain $p^{(1)}$ we may make a placement of value $6$ on any of the indices $\{2,3,4,5,6,7\}$. Suppose we obtain the partial construction $p^{(1)}=(7,0,0,0,6,0,0)$ by  placement on index $5$. Then $I(p^{(1)}) = \{1,5\}$ and $p^{(1)}$ is broken, so $p^{(2)}$ must be obtained by placing the value $5$ on one of the indices $\{2,4,6\}$. 

Here is how this 2-hook construction could go from start to finish:
\begin{align}
p^{(0)}&=(7,0,0,0,0,0,0)\\
p^{(1)}&=(7,0,0,0,6,0,0)\\
p^{(2)}&=(7,0,0,5,6,0,0)\\
p^{(3)}&=(7,4,0,5,6,0,0)\\
p^{(4)}&=(7,4,0,5,6,3,0)\\
p^{(5)}&=(7,4,2,5,6,3,0)\\
p^{(6)}&=(7,4,2,5,6,3,0)\\
p^{(7)}&=(7,4,2,5,6,3,1) \label{example}
\end{align}

It can be checked that $p^{(7)}=\lambda^7+\omega^7+v(E)$, for

\begin{align*}
E&=\{(1,5),(2,5),(3,5), (2,5), (2,4), (3,6), (3,7)\}
\end{align*}

Note in our example that $I(p^{(k)})$ is always two runs of consecutive numbers when $p^{(k)}$ is broken, specifically for $k \in \{1,2,3,4\}$. Then $p^{(5)}$ becomes unbroken from a placement that joins the two runs of $I(p^{(4)})$ together. This can be seen to always be the case from examining the placement rules. Therefore we can provide another way of looking at the placement rules that sheds some light on the structure of a partial construction.

Given a partial construction $p$, let $h(p)$ be the next value to be placed on $p$. Let $x_i(p)$, $x_o(p)$, and $y_i(p)$ be defined as follows: For broken $p$, let $x_i(p)=x_i$, $x_o(p)=x_o$, $y_i(p)=y_i$ so that $I(p)=\{1,2,\dots,x_i-1\} \cup \{x_o+1,x_o+2,\dots,y_i-1\}$. For $p$ unbroken, let $x_i(p)=x_i$ so that $I(p)=\{1,2,\dots,x_i-1\}$ and let  $x_o(p)=0$ and $y_i(p)=0$. Then when $p$ is unbroken, only placement on $x_i(p)$ results in a new unbroken construction. When $p$ is broken, only placements on $x_i(p)$, $x_o(p)$, and $y_i(p)$ are allowed. An unbroken state may be obtained from broken partial construction $p$ if $x_i(p) = x_o(p)$ and a placement is made on $x_i(p)$.
 
We assert that the 2-hook construction describes $\mathcal{N}_{\lambda^n,E_n}$.

\begin{proposition}\label{mainprop}
Every 2-hook permutation of length $n$ is obtained from the 2-hook construction, and every outcome of the 2-hook construction is a 2-hook permutation.
\end{proposition}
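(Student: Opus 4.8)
The plan is to prove the two inclusions of Proposition~\ref{mainprop} after first converting ``legal for placement'' into a statement about the permutation being built. By Corollary~\ref{l2hform} a $2$-hook permutation is an honest permutation $t$ of $[n]$ with $t_1=n$ and $t_n=1$, so I would identify a partial construction $p^{(j)}$ with the index set $U_j:=I(p^{(j)})$, which carries exactly the $j$ largest values $n,n-1,\dots,n-j+1$; equivalently $U_j=\{\,i:t_i>n-j\,\}$. Reading the placement rules through the quantities $x_i(p),x_o(p),y_i(p)$, I would check that legality is exactly the invariant that every $U_j$ is a union of at most two blocks of consecutive indices, the first always containing $1$, and that a new index is admissible precisely when it preserves this shape; moreover index $n$ is forced to be filled last, which supplies $t_n=1$. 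This reduces Proposition~\ref{mainprop} to a single statement: a permutation $t$ with $t_1=n,\ t_n=1$ is an outcome of the construction if and only if each upper level set $\{\,i:t_i>n-j\,\}$ has at most two runs. Establishing this reformulation is the first step.

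For the implication ``outcome $\Rightarrow$ $2$-hook permutation'' I would exhibit a justifying pair set. Writing $s=\lambda^n+\omega^n$, so that $s_1=n+1$, $s_i=n+1-i$ for $1<i<n$, and $s_n=0$, a short computation shows $\sum_{i\le k}(t_i-s_i)\le -1$ for every $1\le k\le n-1$ while the total is $0$; hence the net shift $v:=t-s$ requires a positive number of pairs of $E$ to cross each cut, without exceeding the number of pairs available across that cut. Using the two-block description I would realize $v$ by an explicit family of distinct pairs, updated placement by placement: an extension of a block contributes no long pair, while a jump from an unbroken $p$, or a boundary placement on a broken $p$, contributes pairs joining the new index to the current block endpoints, chosen so that the accumulated $v(E)$ equals $t-s$. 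Since $t$ is a permutation its entries are distinct, so $t\in\mathcal N_{\lambda^n,E_n}$ by Lemma~\ref{lctformula}; the verification is a finite per-case bookkeeping that I expect to be routine.

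For the converse, ``$2$-hook permutation $\Rightarrow$ outcome,'' I must show that no upper level set of $t$ splits into three or more runs, and here I would use the counting principle highlighted after Lemma~\ref{selflemma}. Arguing by contradiction, let $j$ be minimal for which $U_j=\{\,i:t_i>n-j\,\}$ has a third run, and focus on an index $k$ just past the first gap of $U_j$; the appearance of a separate block beyond the gap forces the partial sums of $t-s$ on $[1,k]$ to be more negative than is sustainable, i.e.\ it demands more pairs of a justifying set ending at the low indices than the bound ``at most $k-2$ pairs of $E_n$ end at index $k$'' permits, contradicting that $t$ has a justifying pair set. Converting the qualitative event ``three runs appear'' into this quantitative overload---identifying precisely which cut is saturated and bounding its incoming pairs---is the delicate point, and I expect it to be the main obstacle of the whole argument; by comparison the endpoint bookkeeping (one pair must leave index $1$ to give $t_1=n$, and $t_n=1$) and the use of Lemma~\ref{hooklemma} to force $l(t)=(1,\dots,1)$ are routine once this extremal estimate is secured.
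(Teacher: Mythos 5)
Your opening reformulation is sound (a permutation $t$ with $t_1=n$, $t_n=1$ is an outcome of the weak construction iff every upper level set $\{i: t_i>n-j\}$ is a union of at most two intervals, the first containing $1$), and your partial-sum inequality $\sum_{i\le k}(t_i-s_i)\le -1$ is correct. But both implications are then deferred, and in each case the deferral hides a real problem. For the direction ``outcome $\Rightarrow$ justifying pair set exists,'' the bookkeeping you call routine is precisely the content of the paper's strong $2$-hook construction (the marking rules, the quantities $M^S_k$ and $E^S_k$) together with the $\alpha$-correspondence induction of Theorem \ref{tmain}; it is the bulk of Section 3. Moreover your description of the pairs to be added is wrong: they do not join the new index to ``the current block endpoints.'' Already in the paper's own example, placing $6$ on index $5$ of $(7,0,0,0,0,0,0)$ requires three pairs such as $(1,5),(2,5),(3,5)$, emanating from indices that are not block endpoints. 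Cut inequalities are also not a sufficiency criterion for realizing $v=t-s$ by distinct pairs (the binding constraints are vertex in/out-degree constraints, e.g.\ at most one pair can leave index $1$), so the feasibility framework you set up does not by itself support the incremental construction.

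The converse is worse than ``delicate'': as you have formulated it, it is false, so no sharpening of the counting can close the gap. Take $n=7$ and $t=(7,2,5,3,6,4,1)$, and let $E=\{(1,5),(2,4),(2,5),(2,6),(2,7),(4,5),(4,6)\}$. Then $\lambda^7+\omega^7=(8,6,5,4,3,2,0)$, $v(E)=(-1,-4,0,-1,3,2,1)$, and $\lambda^7+\omega^7+v(E)=t$, so $E$ is a justifying pair set and $t$ qualifies as a $2$-hook permutation in the sense used after Lemma \ref{lctformula}. Yet $\{i: t_i\ge 5\}=\{1,3,5\}$ has three runs, so $t$ is not an outcome of the construction (after placing $7$ at $1$ and $6$ at $5$, index $3$ is not a legal placement). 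Your proposed ``overloaded cut'' never materializes here because a justifying pair set forces only net degrees, never specific pairs: index $5$ can draw its three incoming pairs from $\{1,2,4\}$ rather than $\{1,2,3\}$, which frees index $3$. (This same freedom is glossed over in the paper's proof of Theorem \ref{tmain}, which asserts that $\{(1,k_1),\dots,(k_1-2,k_1)\}\subseteq E$ is forced; the example above contradicts that assertion as well.) Any correct converse must therefore restrict the class of justifying pair sets---for instance to those avoiding the consecutive pairs $(i,i+1)$, which is all that Theorem \ref{ttruemain} requires since $E(P)\subseteq E(G)$, and which is what the strong construction's placement solutions actually satisfy. Arguing from an arbitrary justifying pair set in $E_n$, as your proposal does, the statement you are trying to prove is simply not true, so this is not a missing lemma but a step that would fail.
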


Since we cannot verify a 2-hook permutation without a justifying pair set, we will describe the \emph{strong 2-hook construction}, which is a similar construction to the weaker 2-hook construction but builds a pair set alongside each partial construction. Each step of the strong 2-hook construction will consist of a partial construction and an associated pair set, which we will call a \emph{state}. 

More formally, a \emph{state on $n$}, $S$, is defined as consisting of $S = (p^S, E^S)$ for $p^S$ is a partial construction and $E^S \subseteq E_n$ that occurs during the strong 2-hook construction.  We additionally define $h^S = h(p^S)$, $x_i^S=x_i(p^S)$, $x_o^S=x_o(p^S)$, and $y_i^S=y_i(p^S)$. The initial state $S_0$ on $n$ will consist of $p^{S^{(0)}}$ as the initial construction on $n$ and of $E^{S{(0)}}=\emptyset$. 

The strong 2-hook construction will work like the  2-hook construction in that we are creating objects $S^{(0)}, S^{(1)}, \dots, S^{(n)}$ where the following hold: $S^{(0)}$ is the initial state on $n$ and $S^{(i)}$ is obtained from $S^{(i-1)}$, if possible, by \emph{a placement on index $k$ on $S^{(i-1)}$}, which we will define shortly. 

For state $S$, let $s^S=\lambda^n+\omega^n + v(E^S)$. The intention of the rules of placement for the strong 2-hook construction is that for each $k \in \{0,\dots,n-1\}$, we have $p_i^{S^{(k)}} \leq s_i^{S^{(k)}}$ for all $i \in [n]$ and we have $p^{S^{(n)}}=s^{S^{(n)}}$. The rules for the position of index $i$ that we can place the value $h^S$ on for state $S$ are the same as in the weak 2-hook construction except that there exist cases where the placement might not succeed. If the placement is successful, we may obtain a new state $S'$ with $p^{S'}$ obtained from the placement on index $k$ from $p^S$. 

We define $E^S_k = \{1,2,\dots,k-2\} \cap I(s^S-p^S)$ and $M^S_k= \lambda^n_k+\omega^n_k+v_k(E^S \cup E^S_k)$. If $k$ is a legal placement for $p^S$ in the weak 2-hook construction and $h^S \leq M^S_k$, then consider the partial construction $p$ obtained by placing $h$ on index $k$ in $p^S$ and $E=E^S \cup E^S_k$, and $s=\lambda^n+\omega^n+v(E)$. By the definitions we have chosen, $p_i \leq s_i$ for $i \in [n]$, so $(p,E)$ would satisfy our expectations if it were a state. Similarly, if $h^S \leq s_k^S$, then the hypothetical state $S'$ obtained from $S$ by placing $h^S$ on index $k$ and letting $E^{S'}=E^S \cup E^S_k$ would have $p_i^{S'} \leq s_i^{S'}$ for $i \in [n]$.

These calculations show that in the following rules we have set up for when placement is legal and how the pair set of the next state is obtained in the strong 2-hook construction that our assumption $p^S \leq s^S$ will hold throughout the construction. 

In the strong 2-hook construction, after placement on index $k$ on $S$ is decided legal, we must either \emph{mark} the index $k$ or leave index $k$ \emph{unmarked}. Whether the placement on index $k$ is legal also depends on whether we intend to mark or unmark the index. 

If the location of index $k$ on $S$ would require us to mark the index after placement, the placement is not legal unless $h^S \leq s_k^S$. In the state $S'$ formed by a legal placement on index $k$ and marking the index, $E^{S'}=E^S$

If instead the location of index $k$ on $S$ would require us to unmark the index after placement, the placement is not legal unless $h^S \leq M_k^S$. And in the state $S'$ formed by a legal placement on index $k$ and marking the index, $E^{S'}=E^S \cup E^{S}_k$.

The rules for whether index $k$ is marked or unmarked after placement are as follows:

\begin{itemize}
\item If $S$ is an unbroken state and $h^S>1$:
\begin{itemize}
\item If $k=x^S_i$, then index $k$ is marked if legal to.
\item If $k>x^S_i$ and $k<n$, then index $k$ is unmarked if legal to.

\end{itemize}

\item If $S$ is a broken state with $h^S>1$: 
\begin{itemize}
\item If $x^S_i \neq x^S_o$ and $k=x^S_i$, then index $k$ is marked if legal to.
\item If $x^S_i \neq x^S_o$ and $k=x_o^S$,  then index $k$ is unmarked if legal to. 
\item If $x^S_i = x^S_o$ and $k=x^S_i$,  then index $k$ is marked if legal to.
\item If $k=y^S_i$ and $k=y^S_i$,  then index $k$ is unmarked if legal to. 
\end{itemize}

\item If $S$ is a state with $h^S=1$, we may place on $n$ and unmark $n$ if legal to. 
\end{itemize}  

The final rules concern how the construction completes. If all possible placements are illegal, we say the 2-hook construction has \emph{ended in failure}. If the strong 2-hook construction ends in a state $S^{(n)}$ where $p^{S^{(n)}}$ is not a 2-hook permutation, we also consider the construction to have ended in failure. However, if $p^{S^{(n)}}$ is justified by $E^{S^{(n)}}$, we say the strong 2-hook construction \emph{finds $p^{S^{(n)}}$} and that $E^{S^{(n)}}$ is its \emph{placement solution}. Regardless of either outcome, we still say the strong 2-hook construction \emph{completes} if $S^{(n)}$ is reached.

We now have all of the rules for the strong 2-hook construction. Recall the example \ref{example} we used for the action of the weak 2-hook construction. Let's try to make the same placements as before but using the strong 2-hook construction rules. 

The strong 2-hook connection starts basically the same as the weak 2-hook connection at state $S^{(0)}$ with $p^{S^{(0)}}=((7,0,0,0,0,0,0)$ and with $E^{S^{(0)}}= \emptyset$. We wish to know if the next placement of the example, which was of value $6$ on index $5$, would still be legal. Note that this placement would obtain a broken state from a broken state, the rules state we must check the legality for placement and then unmarking the index. So we must compute $M_5^{S^{(0)}}$. 

We first compute $s^{S^{(0)}}-p^{S^{(0)}}=(1,6,5,4,3,2,0)$ to find $I(s^{S^{(0)}}-p^{S^{(0)}})$. This lets us see that $E_5^{S^{(0)}}=\{(1,5),(2,5),(3,5)\}$, allowing us to see that $M_5^{S^{(0)}}=6$, so  placement on of value $6$ index $5$ is allowable. We now obtain $S^{(1)}$ by placement on index $5$ of $S^{(0)}$ and then unmarking the index:
\begin{align*}
p^{S^{(1)}} &=(7,0,0,0,6,0,0) \\
E^{S^{(1)}} &= \{(1,5),(2,5),(3,5)\} 
\end{align*}

The next placement was of value $5$ on index $4$ of $p^{S^{(1)}}$. We will be placing on $x_o^{S^{(1)}}$, so according to the rules this index will be unmarked if placed on, so we again need to calculate $M^{S^{(1)}}_4$. As before, we first check that $s^{S^{(1)}}-p^{S^{(1)}}=(0,5,4,4,0,2,0)$, which gives us that $E_4^{S^{(1)}}=\{(2,4)\}$ and  therefore that $M^{S^{(1)}}_4=5$. A placement of value $5$ on index $4$ of $S^{(1)}$ is therefore legal. Making the placement on $S^{(1)}$ and unmarking the index gives us state $S^{(1)}$:
\begin{align*}
p^{S^{(2)}}&=(7,0,0,5,6,0,0) \\ 
E^{S^{(2)}} &= \{(1,5),(2,5),(3,5),(2,4)\}
\end{align*}

If we repeat these basic calculations for every step of example \ref{example} , we will find that each step was allowable and eventually obtain state $S^{(7)}$ with $p^{S^{(7)}}=(7,4,2,5,6,3,1)$ and $E^{S^{(7)}}$ being the justifying pair set from before. So by performing the weak 2-hook construction, we never actually encountered an illegal placement in the strong 2-hook construction, and the final state we reached was a 2-hook permutation with a justifying pair set. Our strengthened version of Lemma \ref{mainprop} shows that the strong 2-hook construction always functions this way.

\begin{theorem}\label{tmain}
\begin{itemize}
\item Let $S$ be a state on $n$ from the strong 2-hook construction. If placement on index $k$ of $p^S$ is legal by the weak 2-hook construction rules, then it is legal by the strong 2-hook construction rules. Therefore the strong 2-hook construction never ends in failure.
\item If $S$ is a final state of the strong 2-hook construction, then $p^S$ is justified by $E^S$ and therefore $p^S \in \mathcal{N}_{\lambda^n,E_n}$
\item Additionally, an integer $n$-tuple $t$ is a 2-hook permutation only if $t$ can be found by the 2-hook construction. 
\end{itemize}
\end{theorem}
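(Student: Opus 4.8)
The plan is to run all three bullets off a single invariant, proved by induction along the states $S^{(0)}, S^{(1)}, \dots$ of the strong construction: at every state $S$ we maintain $p^S \le s^S$ coordinatewise, where $s^S = \lambda^n + \omega^n + v(E^S)$. Two bookkeeping facts travel with it. First, $\sum_i s^S_i = \binom{n+1}{2}$ for \emph{every} edge set, since each edge contributes a $+1$ and a $-1$ to $v(E^S)$; hence the total deficit $\sum_i (s^S_i - p^S_i)$ equals $\binom{n+1}{2}$ minus the sum of the already-placed values $n, n-1, \dots, h^S+1$, that is, $\binom{h^S+1}{2}$. Second, every edge of $E^S$ is a non-path edge ending at a filled index: the sets $E^S_k \subseteq \{1,\dots,k-2\}$ only ever create edges $(j,k)$ with $j \le k-2$, added while $k$ is being filled. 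Coordinatewise dominance survives each placement exactly as in the paragraph preceding the theorem (an unmarked placement lowers $s_j$ only at indices $j$ with currently positive deficit, by the definition of $E^S_k$), so the only thing left to check for the first two bullets is the value bound at each placement.

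For the first bullet I induct and, for each index $k$ that the weak rules permit, verify the strong bound: $h^S \le s^S_k$ when the rules mark $k$, and $h^S \le M^S_k$ when they leave it unmarked. The unbroken case is clean. No edge of $E^S$ can touch an unfilled index of an unbroken state (edges end at filled indices, and an edge starting at an unfilled index $j$ would have to end at a filled index $k'>j$, yet in an unbroken state every filled index is smaller than every unfilled one), so $s^S_j = \lambda^n_j + \omega^n_j$ there. In particular $s^S_{x_i^S} = h^S$, so the marked bound at $x_i^S$ is tight; and the deficit count above pins the deficit carried by the filled block at exactly $1$, whence for any unmarked target $k$ the set $E^S_k$ collects every unfilled index of $\{1,\dots,k-2\}$ together with the single filled index of positive deficit, giving $M^S_k \ge h^S$. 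The broken case is the crux and the main obstacle: now edges may start inside the left gap $\{x_i^S,\dots,x_o^S\}$, so $s^S$ is no longer $\lambda^n+\omega^n$ at unfilled indices. Here I must strengthen the invariant to locate the deficit precisely — in a broken state the filled block carries deficit $0$ and all of $\binom{h^S+1}{2}$ sits in the two gaps in a controlled (decreasing) shape — and read off the three bounds $s^S_{x_i^S}\ge h^S$, $M^S_{x_o^S}\ge h^S$, $M^S_{y_i^S}\ge h^S$, while checking each transition: an unmarked placement consumes only gap deficit, a tight marked placement keeps the filled deficit at $0$, and a merging placement at $x_i^S=x_o^S$ deposits exactly one unit into the joined block to restore the unbroken invariant. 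Tracking \emph{where} that single unit lives (it must avoid index $n-1$, which is precisely what the non-path cutoff $\{1,\dots,k-2\}$ guarantees, so that the final $h^S=1$ placement at index $n$ still succeeds) is the delicate part of this bookkeeping.

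The second bullet is then short. The construction assigns the values $n,n-1,\dots,1$ to distinct indices, with $n$ at index $1$ and $1$ at index $n$, so the terminal $p^S$ is automatically a permutation of $[n]$ with first coordinate $n$ and last coordinate $1$; by Corollary \ref{l2hform} it is a $2$-hook permutation. Thus $\sum_i p^S_i = \binom{n+1}{2} = \sum_i s^S_i$, and combined with $p^S \le s^S$ this forces $p^S = s^S$, i.e. $\lambda^n + \omega^n + v(E^S) = p^S$. Since $E^S$ consists of non-path edges, it is a genuine justifying pair set, so $p^S \in \mathcal{N}_{\lambda^n, E_n}$.

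For the third bullet I first translate reachability into a combinatorial condition. Because the index sought at each step is forced (value $v$ must land at $t^{-1}(v)$, and values are placed in decreasing order) and a weak placement is legal exactly when it keeps the filled set to at most two runs, $t$ is found by the construction if and only if, for every $j$, the indices carrying the $j$ largest entries of $t$ form at most two blocks of consecutive integers. It therefore suffices to show a $2$-hook permutation can never have a threshold set $\{i : t_i \ge v\}$ splitting into three or more blocks. I would argue the contrapositive: three separated high blocks force the partial sums of $w = t - \lambda^n - \omega^n$ to violate the crossing-number (prefix-sum) constraints that any non-path forward edge set must satisfy to realize $v(E)=w$, so no justifying pair set exists and $t \notin \mathcal{N}_{\lambda^n, E_n}$. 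This realizability-versus-runs link is the conceptual heart of the converse and yields the forward half of Proposition \ref{mainprop}; together with the first two bullets (which show every weak run lifts to a strong one and every strong output is a justified $2$-hook permutation) it completes the correspondence.
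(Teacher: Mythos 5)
Your invariant $p^S \le s^S$ is the right starting point, and two pieces of your write-up are genuinely good: the bullet-2 argument (every edge contributes $+1$ and $-1$ to $v(E^S)$, so $\sum_i s^S_i = \binom{n+1}{2}$ always; a completed construction outputs a permutation of $[n]$ with $n$ first and $1$ last, so equal sums plus coordinatewise dominance force $p^S = s^S$) is correct and cleaner than anything in the paper, and your reformulation of bullet 3 (weak-findability of $t$ is equivalent to $t_1=n$, $t_n=1$, and every threshold set $\{i: t_i \ge v\}$ having at most two runs) is a correct translation of the placement rules. But both hard bullets are left unproved. For bullet 1 you yourself call the broken case ``the crux and the main obstacle'' and then only describe the strengthened invariant you would need (filled block deficit $0$, gap deficits of controlled decreasing shape) without verifying it through the transition types (marked at $x_i^S$, unmarked at $x_o^S$, unmarked at $y_i^S$, merging at $x_i^S = x_o^S$) or deriving the three bounds $s^S_{x_i^S} \ge h^S$, $M^S_{x_o^S} \ge h^S$, $M^S_{y_i^S} \ge h^S$ from it; that verification is essentially the whole theorem. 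The paper avoids this bookkeeping entirely: it inducts on $n$, shows the state reached after the first one or two placements is in $\alpha$-correspondence with a state on $n-1$ (the four cases $S^{(1a)}, S^{(2a)}, S^{(2b)}, S^{(2c)}$ in its proof), and transfers legality, completion, and justification downward via Lemmas \ref{linduct} and \ref{lcompletion}.

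The more serious problem is bullet 3, where the mechanism you propose cannot work as stated. Prefix-sum (crossing-number) constraints do not see the exclusion of path edges, and that exclusion is essential: throughout the paper, justifying pair sets avoid $E(P)$ (this is why every count reads ``at most $k-2$ pairs end at index $k$'' rather than $k-1$), since $E(P) \subseteq E(G)$. Concretely, take $n=7$ and $t = (7,2,6,3,5,4,1)$, whose values $7,6,5$ sit at indices $1,3,5$ --- three runs, so $t$ is not weakly findable. Then $w = t - \lambda^7 - \omega^7 = (-1,-4,1,-1,2,2,1)$ has every prefix sum nonpositive, and moreover $w = v(E)$ for the legitimate forward pair set $E = \{(1,5),(2,3),(2,4),(2,5),(2,6),(4,6),(4,7)\}$. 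The only obstruction is that $(2,3)$ is a path edge; proving that no path-edge-free set realizes $w$ requires an index-by-index degree analysis (index $2$ is forced to send edges to $4,5,6,7$; index $7$ then receives two edges but needs exactly one), not a prefix-sum inequality. So your contrapositive, as sketched, would fail to rule out exactly the configurations it must rule out. The paper's converse supplies precisely this kind of analysis, run inductively: any justifying pair set forces $k_2 \in \{2, k_1-1, k_1+1\}$ and forces the edges $\{(i,k_1): i \le k_1-2\} \cup \{(i,k_2): i \le k_2-2\}$ into $E$, after which these edges are peeled off and the problem drops to $n-1$ by $\alpha$-correspondence. You would need an argument of that strength in place of the prefix-sum claim.
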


Theorem \ref{tmain} is proven by induction on $n$. In order for induction to work, the connection between states on $n$ and states on $n-1$ needs to be understood. Consider that we just found that the state $S=((7,0,0,5,6,0,0), \{(1,5),(2,5),(3,5),(2,4)\})$ can be obtained from the strong 2-hook construction. However, starting with the initial construction for $n=6$ and then following the rules for placement on index 4 results in the state $S'=((6,0,0,5,0,0), \{(1,4),(2,4)\})$. Notice that $h^S=h^{S'}$ and $|[6]-I(p^S)|=|[5]-I(p^{S'})|$. More strongly, if we set $\alpha: [6] - I(p^S) \to [5]-I(p^{S'})$ to be the order preserving bijection between the two sets, one can note that $\alpha(x^S_i) = x^{S'}_i$, $\alpha(x^S_o) = x^{S'}_o$, and $\alpha(y^S_i) = y^{S'}_i$. We even have the striking condition that  $s^S$ restricted to $[6]-I(p^S)$ is  $ (4,4,2)$ and $s^{S'}$ restricted to $[5]-I(p^{S'})$ is $(4,4,2)$. Similarly,  $M^S$ restricted to $[6]-I(p^S)$ is  $ (4,4,4)$ and $M^{S'}$ restricted to $[5]-I(p^{S'})$ is $(4,4,4)$.

The paragraph above is describing a condition that occurs regularly between states of different lengths called the \emph{$\alpha$-correspondence}. To formally define $\alpha$-correspondence for states, we need to first define a different notion of $\alpha$-correspondence that applies to vectors and pair sets. Suppose we have $t^a \in \mathbb{Z}^{n^a}$, $t^b \in \mathbb{Z}^{n^b}$ for $n^b < n^a$.  If there is a bijective function $\alpha: A \to B$ for two sets $A \subseteq [n^a]$, $B \subseteq [n^b]$, $|A|=|B|$, then we say \emph{$t^a$ and $t^b$ are in $\alpha$-correspondence} if $t^a_i=t^b_{\alpha(i)}$ for all $i \in A$. Similarly if $E^a \subseteq E^{n^a}$ and $E^b \subseteq E^{n^b}$, then \emph{$E^a$ and $E^b$ are in $\alpha$-correspondence} when $(i,j) \in E^a$ if and only if $(\alpha(i),\alpha(j)) \in E^b$.

Two states $S^a$ on $[n^a]$ and $S^b$ on $[n^b]$ are in \emph{$\alpha$-correspondence for states} if $|[n^a-1]-I(p^{S^a})|=|[n^a-1]-I(p^{S^a})|$ and there is a bijection $\alpha: [n^a-1]-I(p^{S^a}) \to [n^b-1]-I(p^{S^b})$ for which all the following hold: $\alpha(x_i^{S^a})=x_i^{S^b}$, $\alpha(x_o^{S^a})=x_o^{S^b}$, $\alpha(y_i^{S^a})=y_i^{S^b}$, $s^{S^a}$ is in $\alpha$-correspondence with $s^{S^b} $, and $M^{S^a}$ is in $\alpha$-correspondence with $ M^{S^b}$. Note that $|[n^a-1]-I(p^{S^a})|=|[n^a-1]-I(p^{S^a})|$ means that $h^{S^a}=h^{S^b}$ as a consequence.

 $\alpha$-correspondence between states is a strong condition that has the following consequences for the next placement on each state:

\begin{lemma}\label{linduct} 
If state $S^{(a,0)}$ on $n_a$ and state $S^{(b,0)}$ on $n_b$ are in $\alpha$-correspondence, then the following hold: 

\begin{enumerate}

\item\label{istate} 
 A placement on $i$ in $S_a$ is legal if and only if a placement on $\alpha(i)$ in $S_b$ is legal. 

\item\label{imark}
Index $i$ is marked in $S^{(a,0)}$ if and only if index $\alpha(i)$ is marked in $^{(a,0)}$.

\item\label{inewalpha} If $S^{(a,1)}$ is obtained from a legal placement on $i$ from $S^{(a,0)}$, and $S^{(b,1)}$ is obtained from a legal placement on $\alpha(i)$ from $S^{(b,1)}$, then $S^{(a,1)}$ and $S^{(b,1)}$ are in $\alpha'$-correspondence where $\alpha'$ is the restriction of $\alpha$ to $[n-1]-I(p^{S^{(a,0)}})$. 

\item\label{iedge} 
If $S^{(a,1)}$ is obtained from a sequence of legal placements on $k_1, k_2, \dots k_l$ starting from $S^{(a,0)}$, and $S^{(b,1)}$ is obtained from a sequence of legal placements on $\alpha(k_1),\alpha(k_2), \dots \alpha(k_l)$ starting from $S^{(b,0)}$, and if $s^{S^{(a,0)}}_i=0$ for $i \in I(p^{S^{(a,0)}})$ and $s^{S^{(b,0)}}_i=0$ for $i \in I(p^{S^{(b,0)}})$, then $E^{S^{(a,1)}}-E^{S^{(a,0)}}$ is in $\alpha$-correspondence with $E^{S^{(b,1)}}-E^{S^{(b,0)}}$. 

\end{enumerate}
\end{lemma}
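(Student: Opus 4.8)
The plan is to verify the four assertions essentially in the order stated, separating the structural (combinatorial) content from the numerical (threshold) content: \emph{which} indices are legal or marked depends only on the run structure of $p^S$ together with the position type of the placement index, whereas the strong-construction thresholds depend only on the scalars $h^S$, $s^S_k$, and $M^S_k$. Since $\alpha$-correspondence for states preserves exactly this data --- the markers $x_i^S, x_o^S, y_i^S$, the broken/unbroken type (with the convention $x_o = y_i = 0$ on the unbroken side), the common value $h^{S^a} = h^{S^b}$, and the values of $s$ and $M$ on the free set $F = [n-1] - I(p^S)$ --- the legality and marking statements should fall out of a short finite case analysis. I would prove these two together. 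For an unbroken state the free set is the single run $F = \{x_i, x_i+1, \dots, n-1\}$, every position is weakly legal, $x_i$ is marked, and all others are unmarked; for a broken state the only weakly legal positions are $x_i, x_o, y_i$, whose marking is dictated by the four broken subcases. In each case the \emph{type} of a position (equal to $x_i$, to $x_o$, to $y_i$, or merely larger than $x_i$) is preserved by $\alpha$, and since a non-$x_i$ element of the unbroken free set necessarily maps to a non-$x_i$ element of the other free set, the marking verdict transfers without $\alpha$ needing to be order preserving. Adding the threshold then finishes the legality statement: a marked placement is legal iff $h \le s^S_k$ and an unmarked one iff $h \le M^S_k$, and because $h$ agrees while $s$ and $M$ are in $\alpha$-correspondence on $F$, the strong legality verdict transfers as well.

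The propagation statement is the inductive heart and the main obstacle. After a legal placement on $k$ the new free set is $F' = F - \{k\}$, and I would take $\alpha'$ to be the restriction of $\alpha$ to $F'$. The structural part --- that $\alpha'$ still carries $x_i, x_o, y_i$ of the new state on $n_a$ to those of the new state on $n_b$ --- is again a deterministic function of the old run type and the position type of $k$, both $\alpha$-invariant, so it reduces to case-checking. The genuinely delicate part is the numerical update of $s$ and $M$ on $F'$. For a marked placement $E^S$ is unchanged, so $s$ is unchanged on $F'$ and only the membership of $k$ in $I(s^S - p^S)$ can shift, perturbing the thresholds $M_j$ for those free $j$ with $k \le j-2$; for an unmarked placement we adjoin the edges of $E^S_k$, which raises $s_k$ to $M^S_k$ and \emph{decrements} $s_j$ by one at each $j \le k-2$ lying in $I(s^S-p^S)$, and correspondingly perturbs the $M_j$.

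To see that these perturbations agree under $\alpha'$ I must know that the membership ``$j \in I(s^S-p^S)$'' transfers to ``$\alpha(j) \in I(s^{S^b}-p^{S^b})$'' and that the relation ``$j \le k-2$'' transfers to ``$\alpha(j) \le \alpha(k)-2$''. The first is immediate from $s$-correspondence, since on a free index $j$ membership is just $s_j \ne 0$; the second is the crux. It cannot be read off the existential definition alone, because a witnessing $\alpha$ need not be globally order preserving where $s$ and $M$ repeat values along a run. The route I expect to take is to prove that any witnessing $\alpha$ must respect the order and adjacency structure of the two-run free set $F = \{x_i,\dots,x_o\} \cup \{y_i,\dots,n-1\}$ to the extent needed for ``$\le k-2$'' to transfer --- using the fixing of $x_i, x_o, y_i$ together with the explicit run form --- and to handle the repeated-value case by checking that indices $\alpha$ may permute share equal $s$ and $M$ \emph{and} lie on the same side of $k$, so that the deterministic update is identical on them. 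I expect establishing this minimal order/adjacency compatibility to require the most care; once it is in hand the updates to $s$ and $M$ match term by term and the correspondence propagates.

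Finally, the edge statement follows by induction on the length $l$ of the placement sequence, applying the propagation statement at each step to keep the states in $\alpha$-correspondence and the marking statement to know at which steps edges are adjoined. The role of the hypothesis $s_i = 0$ for $i \in I(p^S)$ is to pin down $E^S_k$ at each unmarked placement: with $s$ vanishing on occupied indices, every occupied $j \le k-2$ lies in $I(s^S-p^S)$, so $E^S_k$ is a canonical block of lower indices --- all occupied ones together with the free ones having $s_j \ne 0$ --- each paired with $k$. The free-index portion of each $E^S_k$ is then in $\alpha$-correspondence by the argument above; the subtlety I would flag is reconciling the stated edge-correspondence, whose bijection is defined only on the free set, with edges incident to originally-occupied indices, which I would handle by observing that these behave uniformly under the $s=0$ hypothesis and by restricting the claim to the newly placed indices, all of which lie in the domain of $\alpha$. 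As a boundary case I would dispose of $h^S = 1$ separately, where $F$ is empty and the only placement is on index $n$, so the correspondence statements are vacuous or immediate.
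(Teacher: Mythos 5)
Your proposal is correct, and its skeleton is the same as the paper's own proof: part \ref{imark} follows because $\alpha$ fixes $x_i$, $x_o$, $y_i$; part \ref{istate} follows from that together with $h^{S^a}=h^{S^b}$ and the correspondence of $s$ and $M$ on the free set; part \ref{inewalpha} is propagation; and part \ref{iedge} is handled one placement at a time, with the vanishing hypothesis confining each newly added pair to the domain of $\alpha$. The genuine difference is one of rigor: the step you call the crux --- transferring the relation $j\le k-2$, and hence the sets $E^S_k$, across $\alpha$ --- is exactly the step the paper dismisses with ``from this we can reason,'' and you are right that it is not free, since the definition of $\alpha$-correspondence never requires $\alpha$ to preserve order. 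Your rigidity plan does close this gap, and in fact more cleanly than your same-side-of-$k$ fallback suggests: under the vanishing hypothesis every free index $j<n$ has $s_j\neq 0$ (it starts at $n+1-j$ and loses at most one per later unmarked placement), so $M_j-s_j$ equals the number of free indices $\le j-2$; this quantity is weakly increasing along the free set and strictly increasing except across the pairs $\{x_i,x_i+1\}$ and $\{y_i,y_i+1\}$, so any witnessing $\alpha$, which preserves $M-s$ and pins $x_i$, $x_o$, $y_i$, is forced to be order preserving outright. (Strictly speaking, parts \ref{istate}--\ref{inewalpha} are stated without the vanishing hypothesis; there the cheaper fix --- clearly the intended reading, given that every $\alpha$ exhibited in the proofs of Theorems \ref{tmain} and \ref{tunbreak} is order preserving --- is to build order preservation into the definition of $\alpha$-correspondence, which also collapses your crux to a triviality.) One misreading to correct: the hypothesis ``$s^S_i=0$ for $i\in I(p^S)$'' is the paper's abusive shorthand for $s^S_i=p^S_i$; its own usage in the proof of Theorem \ref{tmain} shows this, where ``$s^{S^{(1a)}}_2=0$'' is asserted right after a marked placement with $E^{S^{(1a)}}=\emptyset$, so that $s_2=n-1=p_2$. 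Under that reading occupied indices never lie in $I(s^S-p^S)$, no new pair is ever incident to an originally occupied index, and the reconciliation you flag at the end of your sketch (edges incident to occupied indices, restricting the claim to newly placed indices) is unnecessary rather than delicate.
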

\begin{proof}
Lemma \ref{linduct}.\ref{imark} follows from how $\alpha(x^{S^{(a,0)}}_i)=x^{S^{(b,0)}}_i$, $\alpha(x^{S^{(a,0)}}_o)=x^{S^{(b,0)}}_o$, $\alpha(y^{S^{(a,0)}}_i)=y^{S^{(b,0)}}_i$.

The proof of Lemma \ref{linduct}.\ref{istate} follows from the definition of $\alpha$-correspondence and from Lemma \ref{linduct}.\ref{imark}. The proof of Lemma \ref{linduct}.\ref{inewalpha} follows from Lemma \ref{linduct}.\ref{istate}. 

The proof of Lemma \ref{linduct}.\ref{iedge}  requires some explanation: Suppose in the stated conditions $l=1$. By the rules of the strong 2-hook construction, if $s^{S^{(a,0)}}_i=0$ for $i \in I(p^{S^{(a,0)}})$ then the pairs of $E^{S^{(a,1)}}-E^{S^{(a,0)}}$ must start and end in $[n]-I(p^{S^{(a,0)}})$ and the pairs of $E^{S^{(b,1)}}-E^{S^{(b,0)}}$ must start and end in $[n]-I(p^{S^{(b,0)}})$. 

From this we can reason that if $k$ is legal placement for $S^{(a,0)}$, then $E_k^{S^{(a,0)}}$ is in $\alpha$ correspondence with $E_{\alpha(k)}^{S^{(b,0)}}$. Additionally, by Lemma \ref{linduct}.\ref{imark} the index $k$ is unmarked and  $E_k^{S^{(a,0)}}$ is contained in the pair set of the state  $S^{(a,1)}$ iff  the index $\alpha(k)$ is unmarked and  $E_{\alpha(k)}^{S^{(b,0)}}$ is contained in the pair set of $S^{(b,1)}$.

So $E^{S^{(a,1)}}-E^{S^{(a,0)}}$ is always in $\alpha$-correspondence with $E^{S^{(b,1)}}-E^{S^{(b,0)}}$. If $l>1$ we may apply this reasoning for each pair of placements $k_i$ and $\alpha(k_i)$ for $i\in [l]$. 
\end{proof}

Because of this lemma, $\alpha$-correspondence can lend insight into whether an instance of the strong 2-hook construction completes.

\begin{lemma}\label{lcompletion}
Let $S^a$ on $n^a$ and $S^b$ on $n^b$ be two states in $\alpha$ correspondence and with $s^{S^a}_i=0$ for $i \in I(p^{S^a})$ and $s^{S^b}_i=0$ for $i \in I(p^{S^b})$. Then the sequence $k_1, k_2, \dots k_l$  are legal placements that complete the strong $2$-hook construction when starting from $S_a$ iff the sequence $\alpha(k_1), \alpha(k_2), \dots \alpha(k_l)$  are legal placements that complete the strong $2$-hook construction when starting from $S_b$. 

Additionally if $(p^a, E^a)$ is the last state of the 2-hook construction starting from $S^a$ and if $(p^b, E^b)$ is the last state of the 2-hook construction starting from $S^b$ then $E^a$ justifies $p^a$ iff $E^b$ justifies $p^b$. 
\end{lemma}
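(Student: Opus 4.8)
The plan is to treat the two assertions separately, since they draw on different parts of Lemma \ref{linduct}. The completion equivalence is purely a statement about legality and, as it turns out, needs nothing about how $s$ and $p$ are related; the justification equivalence is where the standing hypothesis (that $s^{S^a}-p^{S^a}$ vanishes on $I(p^{S^a})$, and likewise for $b$) earns its keep.

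For the completion equivalence I would induct on the common number of remaining placements, which is $l=h^{S^a}=h^{S^b}$ (equal because $\alpha$-correspondence forces $h^{S^a}=h^{S^b}$). For a single step, Lemma \ref{linduct}.\ref{imark} matches the mark/unmark decision at $k_1$ with that at $\alpha(k_1)$, Lemma \ref{linduct}.\ref{istate} shows the placement at $k_1$ is legal from $S^a$ iff the placement at $\alpha(k_1)$ is legal from $S^b$, and Lemma \ref{linduct}.\ref{inewalpha} returns the two successor states to $\alpha'$-correspondence, so the inductive hypothesis applies to the tails $k_2,\dots,k_l$ and $\alpha(k_2),\dots,\alpha(k_l)$. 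The only placement not indexed by $\alpha$ is the terminal one—value $1$ on index $n$, forced once $[n-1]-I(p)$ is exhausted—and I would check its legality transfers by hand; this reduces to comparing $M_n^{S^a}$ and $M_n^{S^b}$, which agree once one knows that the edge sets accumulated along the run correspond.

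For the justification equivalence, recall that $E^a$ justifies $p^a$ means precisely that the completed vectors $p^a$ and $s^a:=\lambda^{n^a}+\omega^{n^a}+v(E^a)$ coincide, and I would verify this coincidence one index-class at a time. Index $n^a$ comes for free: $v(E^a)$ has entrywise sum $0$ and $\lambda^{n^a}+\omega^{n^a}$ has sum $\binom{n^a+1}{2}$, while the completed $p^a$ is a permutation of $[n^a]$ with the same sum, so equality of $p^a$ and $s^a$ at every other index forces it at $n^a$. On the already-placed indices $I(p^{S^a})$ the entries of $p$ are frozen and, because Lemma \ref{linduct}.\ref{iedge} confines the edges created during the run to $[n^a]-I(p^{S^a})$, the entries of $s$ are frozen as well; the standing hypothesis that $s^{S^a}$ and $p^{S^a}$ agree on $I(p^{S^a})$ then makes justification automatic there, and symmetrically on $I(p^{S^b})$. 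That leaves the interior unplaced indices $[n^a-1]-I(p^{S^a})$, where I would combine $p^a_i=p^b_{\alpha(i)}$ (the two runs deposit the same value at $\alpha$-matched indices, from the completion part), $s^{S^a}_i=s^{S^b}_{\alpha(i)}$ (state $\alpha$-correspondence), and $v_i(E^a-E^{S^a})=v_{\alpha(i)}(E^b-E^{S^b})$ (Lemma \ref{linduct}.\ref{iedge}) to obtain $s^a_i=s^b_{\alpha(i)}$; hence $p^a_i=s^a_i$ holds iff $p^b_{\alpha(i)}=s^b_{\alpha(i)}$, and letting $\alpha$ range over its domain gives the claim.

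The main obstacle is the bookkeeping forced by $\alpha$ only seeing the interior unplaced indices $[n-1]-I(p)$: the already-placed indices and the final index $n$ must be dispatched by the standing hypothesis and by the constant coordinate-sum respectively, and—most delicately—one must ensure that the edges created during the run stay inside the unplaced region, and in particular away from $n$, so that Lemma \ref{linduct}.\ref{iedge} transports them faithfully across $\alpha$. This amounts to checking that the condition ``$s-p$ is supported off $I(p)$'' survives each legal placement, which is the crux that licenses invoking Lemma \ref{linduct}.\ref{iedge} on the whole placement sequence rather than merely a single step.
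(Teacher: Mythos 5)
Your overall architecture is the paper's: the completion equivalence is the induction along the placement sequence using Lemma \ref{linduct}.\ref{istate}, \ref{linduct}.\ref{imark} and \ref{linduct}.\ref{inewalpha}, and the justification equivalence rests on Lemma \ref{linduct}.\ref{iedge} plus the standing hypothesis; your index-class bookkeeping (placed indices, interior unplaced indices, and index $n$ via the coordinate-sum argument) is a more careful rendering of what the paper dispatches in a few terse sentences.

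However, the invariant you single out as ``the crux'' is false as stated, and if you tried to verify it you would get stuck. The condition ``$s-p$ is supported off $I(p)$'' does \emph{not} survive each legal placement, because a marked placement leaves $E$ (hence $s$) unchanged while legality only requires $h^S \le s^S_k$, not equality. The paper's own running example exhibits the failure: after placing the values $7,6,5,4,3$ on indices $1,5,4,2,6$ one has $E=\{(1,5),(2,5),(3,5),(2,4),(3,6)\}$, $s=(7,4,3,5,6,3,0)$ and $p=(7,4,0,5,6,3,0)$; the next placement puts $h=2$ on index $3$ (marked, legal since $2 \le s_3 = 3$), after which $(s-p)_3 = 1$ even though $3 \in I(p)$. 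What does persist---and is all that Lemma \ref{linduct}.\ref{iedge} needs---is that $s-p$ stays supported off $I(p^{S^a})$, the placed set of the state where the sequence \emph{began}: those indices are never placed on again, and an easy induction shows every pair added along the run has both endpoints outside $I(p^{S^a})$, so neither $p$ nor $s$ ever changes there. Relatedly, your hope that the created edges stay ``away from $n$'' also fails: the terminal placement of value $1$ on index $n$ is precisely where pairs ending at $n$ are added (the pair $(3,7)$ in the example above), and it is this addition that repairs the stray index and makes the final state justified; to transport those pairs across the correspondence you must extend $\alpha$ by $n^a \mapsto n^b$, which is harmless since both runs end by placing $1$ on their last index. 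With these two corrections your argument goes through and is essentially the proof the paper gives.
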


\begin{proof}

If $l=1$ then the first statement is the same as Lemma \ref{linduct}.\ref{istate}. If placements $k_1, \dots, k_{i-1}$ have been legal placements on $S^a$ and $\alpha(k_1), \dots, \alpha(k_{i-1})$ have been legal placements on $S^b$, then the next placement $k_i$ is legal for the former resulting state iff the next placement $\alpha(k_i)$ is legal for the latter resulting state by Lemma \ref{linduct}.\ref{istate}. It then follows from this simple induction that the former strong 2-hook construction will complete iff the latter strong 2-hook construction will complete.

The pair sets $E^a-E^{S^a}$ and $E^b-E^{S^b}$ are also in $\alpha$ correspondence by Lemma \ref{linduct}.\ref{iedge}.

So $s_i^{S^a}=0$ for $i \in I(P^{S^a})$ if and only if $s_i^{S^b}=0$ for $i \in I(P^{S^b})$. Since we gave the condition that $s_i^{S^a}=0$ for $i \in [n]-I(P^{S^a})$ and $s_i^{S^b}=0$ for $i \in [n]-I(P^{S^b})$, it follows that $E^a$ justifies $p^a$ if and only if $E^b$ justifies $p^b$. 
\end{proof}

We are now ready to prove that the strong 2-hook construction works.

\begin{proof}[\ref{tmain}]
We will show by induction on $n$ that the strong 2-hook construction finds a $t \in \mathcal{N}_{\lambda^n,E_n}$ and finds a pair set that justifies $t$, and that if $t$ cannot be found by the 2-hook construction then $t$ is not a 2-hook permutation. The base case of $n=1$ is trivial, and we may assume that the inductive hypothesis holds for $n-1$. 

Consider an integer vector $t \in \mathbb{Z}^n$ for which $t_1, \dots t_n$ form a permutation of $[n]$ with $t_1=n$ and $t_n=1$. Let $k_1 \in [n]$ be the index for which $t_{k_1}=n-1$ and $k_2 \in [n]$ be the index for which $t_{k_2}=n-2$. We will assume there is a pair set $E \subseteq E_n$ that justifies $t$.

We first show that if the indices $k_1, k_2$ could not have been the first two placements from the strong 2-hook construction on the initial state $S^{(0)}$ on $n$, then $E$ cannot justify $t$. If $k_1=2$ there is nothing to consider, as $k_1=2$ is a legal placement and $k_2$ can be any other index from 3 to $n-1$. Assume $k_1>2$. We note that $\lambda^n_{k_1}+\omega^n_k=n+1-k_1$. $E^{S^{(0)}}$ is empty, so $M^{S^{(0)}}_{k_1}=n-1$, but for $E \subseteq E_n$ to justify a $t$ with $t_k=n-1$ we must have $E$ include all $k_1-2$ pairs in $E$ that end at $k_1$. Therefore we must have $\{(1,k_1),(2,k_1), \dots,(k_1-2,k_1)\} \subseteq E$.  However if $t_{k_2}=n-2$ for some $k_2 \in \{3,\dots,k_1-2\}$, then $t_{k_2}-(\lambda^n_{k_2}+\omega^n_{k_2})-1=k_2-2$, and so there is no pair set that could justify $t$. Similarly, if $t_{k_2}=n-2$ for $k_2>k_1+1$ then  $t_{k_2}-(\lambda^n_{k_2}+\omega^n_{k_2})=k_2-3$, and there are now only $k_2-4$ edges that could be in $E$ that end at $k_2$. So if $t_{k_2}=n-2$, then $k_2 \in \{2, k_1-1,k_1+1\}$. Similar computations show that if $k_2=k_1-1$ or $k_1+1$ then $k_2-3$ edges must end at $k_2$ so $\{(2,k_2),(3,k_2),\dots,(k_2-2,j)\} \subseteq E$. No edges need to be added if $k_2=2$. Therefore $k_1$ and $k_2$ must be legal as the first two placements of the strong 2-hook construction. 

We now show that the first two placements of the 2-hook construction on the initial state $S^{(0)}$ on $n$ are always legal, and result in a state where can apply Lemma \ref{lcompletion}.  Since $s_2^{S^{(0)}}=n-1$, placement on $2$ on $S^{(0)}$ is legal. Let $S^{(1a)}$ the result of placement on $2$ on $S^{(0)}$. Then $s^{S^{(1a)}}_3=n-2$, so placement on $3$ in $S^{(1a)}$ is legal.  Since $s^{S^{(1a)}}_2=0$, $M^{S^{(1a)}}_k=n-2$ for $k \in \{4,\dots, n-1\}$, so placement on those indices is also legal. Basic calculations using our rules shows that for all $k \in \{3,\dots,n-1\}$, placement on $k$ from $S^{(1a)}$ results in a state $S^{(2a)}$ with $s^{S^{(2a)}}_2=0$ and $s^{S^{(2a)}}_k=0$

Alternatively, since $M^{S^{(0)}}_k=n-1$ for $k \in \{3,\dots,n-1\}$, placement on one of the indices $3,\dots,n-1$ is legal in $S^{(0)}$. Let $S^{(1b)}$ be a state resulting from placement on one of those indices in $S^{(0)}$. Then $s_2^{S^{(1b)}}=n-2$, so placement on $2$ is legal. $s_1^{S^{(1b)}}=0$ and $s_k^{S^{(1b)}}=n-2$, so  $M^{S^{(1b)}}_{k-1}=n-2$, and $M^{S^{(1b)}}_{k+1}=n-2$. Therefore placement on indices $k-1$, $k+1$ is legal as well, and for $j \in \{2,k-1,k+1\}$ placement on $j$ on $S^{(1b)}$ results in a state $S^{(2b)}$ with $s^{S^{(2b)}}_k=0$ and $s^{S^{(2b)}}_j=0$.

Let $S^{[0]}$ be the initial state on $n-1$. We now show that $S^{(1a)}$ and any state from a placement on $S^{(1b)}$ has an $\alpha$-correspondence with a state on $n-1$. The possibilities are as follows:

\begin{itemize}
\item Let $\alpha:[n]-\{2\} \to [n-1]$ be defined as $\alpha(1)=1$, $\alpha(i)=i-1$ for $i>2$. Then $S^{(1a)}$ and $S^{[0]}$ are in $\alpha$-correspondence.

\item Let $S^{(2a)}$ be the state from placing $n-2$ on $2$ in $S^{(1b)}$. Let $S^{[1a]}$ be the state from placing $n-2$ on $k-1$ in $S^{[0]}$. Define the bijection $\alpha: [n]-\{2,k\} \to [n-1]-\{k-1\}$  with $\alpha(i)=i-1$ for $i \in [n]-\{2,k\}$. Then there is in $\alpha$ correspondence from $S^{(2a)}$ to $S^{[1a]}$. 

\item Let $S^{(2b)}$ be the state from placing $n-2$ on $k-1$ in $S^{(1b)}$. Let $S^{[1b]}$ be the state from placing $n-2$ on $k-1$ in $S^{[0]}$.  Define the bijection $\alpha: [n]-\{k-1,k\} \to [n-1]-\{k-1\}$  with $\alpha(i)=i$ for $i \in \{2,\dots,k-2\}$ and $\alpha(i)=i-1$ for $i>k$. Then there is in $\alpha$ correspondence from $S^{(2b)}$ to $S^{[1b]}$. 

\item Let $S^{(2c)}$ be the state from placing $n-2$ on $k+1$ in $S^{(1b)}$. Reconsider state $S^{[1a]}$. Define the bijection $\alpha: [n]-\{k,k+1\} \to [n-1]-\{k\}$  with $\alpha(i)=i$ for $i \in \{2,\dots,k-1\}$ and $\alpha(i)=i-1$ for $i>k+1$. Then there is in $\alpha$ correspondence from $S^{(2c)}$ to $S^{[1a]}$. 
\end{itemize}

Each of these possibilities result in a state $S$ on $n$ in $\alpha$-correspondence with a state $S'$ on $n-1$, where $s^S_i=0$ for $i \in I(p^S)$ and $s^{S'}_i=0$ for $i \in I(p^{S'})$. By Lemma's \ref{linduct} and \ref{lcompletion}, we see that since the inductive hypothesis applies to $S'$, the strong 2-hook construction encounters no illegal placements and can always complete from $S$ and return a $2$-hook permutation with a justifying pair set. 

We now show no other 2-hook permutations exist except for those found by the 2-hook construction. Suppose there is a $t \in \mathcal{N}_{\lambda^n, E_n}$ with justifying pair set $E$ so that $t$ cannot be found by the strong 2-hook construction. 

Let $t_{k_1}=n-1$ and $t_{k_2}=n-2$ as before. By placing value $n-1$ on index $k_1$ and then possibly value $n-2$ on index $k_2$ in the state on the state $S^{(0)}$ on $n$, we can always get one of the states $S^{(1a)}$, $S^{(2a)}$, $S^{(2b)}$ or $S^{(2c)}$. We found above that each of these states is a state $S$ on $n$ that is in $\alpha$-correspondence to some state $S'$ on $n-1$.  We construct $t' \in \mathbb{Z}^{n-1}$ with $t'_{\alpha(i)} = t_i$ for $i \notin [n]-I(p^{S'})$ and will find a justifying pair set for it. 

Let $E^a=\{(i,k_1)|i \in [k_1-2]\} \cup \{(i,k_2)|i \in [k_2-2]\}$.  Then $E^a  \subseteq E$. This means that $E$ cannot contain pairs $(k_1,i) \in E_n$ for $i \in [n]$ because if $(k_1,i) \in E$ then  $\lambda_{k_1}^n+\omega_{k_1}^n+v_{k_1}(E) > \lambda_{k_1}^n+\omega_{k_1}^n+v_{k_1}(E^S)=n-1$. $E$ cannot contain pairs $(k_2,i) \in E_n$ for $i \in [n]$ by similar reasoning. Therefore if $(i,i') \in E-E^a$, then $i, i' \in [n]-\{k_1, k_2\}$. 

Therefore $t'$ is justified by $E^{S'} \cup E'$, where $E'$ is in $\alpha$ correspondence with $E-E^a$. By inductive hypothesis, $t'$ must be able to be found by the strong 2-hook construction by a sequence of placements $k_1,\dots, k_l$ on $S'$. Therefore $t$ must be able to be found by the series of placements $\alpha^{-1}(k_1), \dots, \alpha^{-1}(k_l)$ on $S$, contradicting our assumptions. 
\end{proof}

\section{The Main Result}

So for $2$-hooks, the strong 2-hook construction can find a justifying pair set for every 2-hook permutation. To find the coefficient $c_t$ for an 2-hook permutation $t$, we need to know all possible justifying pair sets for $t$ that contain only pairs in $E_n-E(G)$, which might not even contain the placement solution.  We will rectify this in this section.

Let $\mathcal{E}(G)$ be the power set of $E_n-E(G)$. Let $\mathcal{E}_t$ be the set of all justifying pair sets for $t$. Given arbitrary pair set $E^a \subseteq E_n$, suppose there are indices $a,b,c \in [n]$ with $a<b<c$ so that $b$ is marked, $(b,c) \notin E^a$, $(a,c) \in E^a$, $(a,b),(b,c) \in E_n$. By definition of $b$ being marked $(a,b) \notin E^a$. We call the pair set $E^b=E^a-\{(a,c)\} \cup \{(a,b),(b,c)\}$ the result of \emph{breaking arc $(a,c)$ over $b$} in $E^a$. Similarly, $E^a$ could be said to be the result of \emph{unbreaking arc $(a,c)$} in $E^b$. The important thing to note is that $v(E^a)=v(E^b)$. 

The following lemma is a consequence of the definition of $\alpha$-correspondence for pair setsand definition of unbreaking arcs, but bears statement. 

\begin{lemma}\label{lunbreak}
If $E^a$ and $E^b$ are in $\alpha$ correspondence and if $(i,j), (j,k) \in E^a$, then $(i,j)$ and $(j,k)$ can be unbroken in  $E^a$ iff $(\alpha(i),\alpha(j))$ and $(\alpha(j),\alpha(k))$ can be unbroken in $E^b$. Additionally, unbreaking both pairs of arcs obtains two new pair sets $E^{a'}$ and $E^{b'}$ that are also in $\alpha$-correspondence. 
\end{lemma}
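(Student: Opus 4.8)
The plan is to unfold the definition of unbreaking into its component conditions and transfer each one across $\alpha$. Unbreaking the arcs $(i,j)$ and $(j,k)$ (with $i<j<k$) in $E^a$ is by definition the reverse of breaking $(i,k)$ over $j$, so reading off the breaking definition it is legal precisely when (i) both $(i,j),(j,k)$ lie in $E^a$, (ii) the index $j$ is marked, and (iii) the merged arc $(i,k)$ is absent from $E^a$. Condition (iii) is exactly what makes $v(E^a)=v(E^{a'})$ hold for the result $E^{a'}=E^a-\{(i,j),(j,k)\} \cup \{(i,k)\}$, since otherwise we would remove $v((i,j))+v((j,k))$ without compensating by $v((i,k))$. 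Condition (i) is the hypothesis, so I only need to transfer (ii) and (iii).

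First I would record that $\alpha$ is order preserving, so that $i<j<k$ forces $\alpha(i)<\alpha(j)<\alpha(k)$ and the images $(\alpha(i),\alpha(j))$, $(\alpha(j),\alpha(k))$, $(\alpha(i),\alpha(k))$ are genuine arcs. Condition (iii) then transfers immediately from $\alpha$-correspondence of pair sets: $(i,k)\in E^a \iff (\alpha(i),\alpha(k))\in E^b$, hence $(i,k)\notin E^a \iff (\alpha(i),\alpha(k))\notin E^b$; and the same correspondence gives $(i,j),(j,k)\in E^a \iff (\alpha(i),\alpha(j)),(\alpha(j),\alpha(k))\in E^b$. The one condition invisible to the bare pair sets is the marking condition (ii), which is a property of the states producing $E^a$ and $E^b$; here I would use that these pair sets arise from states in $\alpha$-correspondence and invoke Lemma \ref{linduct}.\ref{imark}, which yields that $j$ is marked iff $\alpha(j)$ is marked. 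Conjoining the three equivalences gives the biconditional of the first assertion.

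For the second assertion I would write the two outcomes explicitly, $E^{a'}=E^a-\{(i,j),(j,k)\} \cup \{(i,k)\}$ and $E^{b'}=E^b-\{(\alpha(i),\alpha(j)),(\alpha(j),\alpha(k))\} \cup \{(\alpha(i),\alpha(k))\}$, and verify $\alpha$-correspondence pairwise. For any pair $(p,q)$ not among the three arcs touched by the operation, membership in $E^{a'}$ agrees with membership in $E^a$ and membership in $E^{b'}$ agrees with membership in $E^b$, so $(p,q)\in E^{a'} \iff (\alpha(p),\alpha(q))\in E^{b'}$ is inherited from the correspondence of $E^a$ and $E^b$; for the three touched arcs the equivalence is immediate, since $(i,j),(j,k)$ and their images are deleted on both sides while $(i,k)$ and $(\alpha(i),\alpha(k))$ are inserted on both sides.

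The step I expect to be the main obstacle --- really the only nonroutine point --- is the marking condition, because \emph{marked} is not data carried by a bare pair set but by the state that produced it; making the first assertion precise therefore requires committing to the reading that $E^a,E^b$ come from $\alpha$-corresponding states and then appealing to Lemma \ref{linduct}.\ref{imark}. Everything else is a direct rewriting of the definitions and a routine pairwise check, which is why the result is genuinely, as claimed, a consequence of the definitions of $\alpha$-correspondence and unbreaking.
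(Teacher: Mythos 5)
Your proposal is correct and takes essentially the same approach as the paper: the paper gives no explicit proof, stating only that the lemma ``is a consequence of the definition of $\alpha$-correspondence for pair sets and definition of unbreaking arcs,'' and your direct unfolding of the unbreaking conditions, transfer of each condition through $\alpha$, and pairwise verification of the resulting pair sets is precisely that verification. Your appeal to Lemma \ref{linduct}.\ref{imark} to transfer the marking condition (together with reading $E^a, E^b$ as arising from $\alpha$-corresponding states, since marking is state data rather than pair-set data) correctly supplies the one detail the paper leaves implicit.
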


Finding $\mathcal{E}_t$ by breaking arcs of $E$ in every possible sequence sounds somewhat unreasonable, as each pair in $E$  has many places to break. However, it's somewhat more reasonable than that due to the way the placement solution is constructed, and it is why we were marking indices. Let $M$ be the set of all indices marked in the process of finding $t$ through the strong 2-hook construction. 

We now show we can construct  $\mathcal{E}_t$ through arc breaks. 

\begin{theorem}\label{tunbreak}
Let $t$ be an element of $\mathcal{N}_{\lambda^n,E_n}$. Let $E$ be the placement solution for $t$. If $E^a \subseteq E_n$ justifies $t$, then $E^a$ can be obtained from  a series of arc breaks on $E$. Additionally, every arc break in the sequence must always be over a vertex in $M$.
\end{theorem}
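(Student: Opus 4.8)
The plan is to argue by induction on $n$, following closely the architecture of the proof of Theorem \ref{tmain}: peel off the two largest values of $t$, pass to a length $n-1$ problem through the $\alpha$-correspondences built there, apply the inductive hypothesis, and then transport the resulting arc breaks back up to length $n$ using Lemma \ref{lunbreak}. Note that one direction is essentially free: since breaking an arc over a marked vertex preserves $v$ (the remark that $v(E^a)=v(E^b)$), every set obtained from the placement solution $E$ by such breaks is again a justifying pair set for $t$. The real content, and what the induction must supply, is the converse --- that every justifying $E^a$ is reachable --- together with the bookkeeping that each break can be taken over a vertex of $M$. The base case is immediate, as for small $n$ there are no arcs available to break.

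For the inductive step, fix $t \in \mathcal{N}_{\lambda^n,E_n}$ with placement solution $E$ and marked set $M$, and let $E^a$ be any justifying pair set. Let $k_1,k_2$ be the indices carrying the values $n-1$ and $n-2$. The first step is to isolate the forced arcs: exactly as in the opening of the proof of Theorem \ref{tmain}, the values $n-1$ and $n-2$ force a set $F$ of arcs ending at $k_1$ and $k_2$ to lie in every justifying pair set (with the single arc leaving index $1$ committed to $k_1$, which is what excludes source $1$ at $k_2$), and they forbid any arc from starting at $k_1$ or $k_2$. Hence $F \subseteq E$ and $F \subseteq E^a$, and the remainders $E \setminus F$ and $E^a \setminus F$ contain no arc incident to $k_1$ or $k_2$. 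Moreover $F$ is inert under the break operation: an $F$-arc cannot be the arc destroyed by a break (it is forced to remain), and neither $k_1$ nor $k_2$ can serve as a middle vertex, since a break over it would create a forbidden arc starting at $k_1$ or $k_2$. Thus every break relevant to reaching $E^a$ happens strictly away from these two indices.

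Next I would invoke the reduction of Theorem \ref{tmain}: placing $n-1$ on $k_1$ and $n-2$ on $k_2$ lands in one of the states $S^{(1a)},S^{(2a)},S^{(2b)},S^{(2c)}$, each of which is in $\alpha$-correspondence with a state on $n-1$ and satisfies $s_i=0$ on $I(p)$. Writing $t'$ for the reduced tuple, Lemma \ref{lcompletion} together with Lemma \ref{linduct}.\ref{iedge} identifies the placement solution of $t$ with that of $t'$ step by step, so under $\alpha$ the remainder $E\setminus F$ corresponds to the placement solution $E'$ of $t'$, while $E^a \setminus F$ corresponds to a justifying pair set $(E^a)'$ for $t'$; by Lemma \ref{linduct}.\ref{imark} the marked set $M$ restricts under $\alpha$ to the marked set $M'$ of the reduced construction. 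The inductive hypothesis then writes $(E^a)'$ as a sequence of arc breaks on $E'$, each over a vertex of $M'$. Applying Lemma \ref{lunbreak} once per break, this sequence pulls back through $\alpha^{-1}$ to a sequence of arc breaks carrying $E\setminus F$ to $E^a\setminus F$, each over a vertex of $M$; as $F$ is untouched, the identical sequence carries $E$ to $E^a$, and all of its breaks are over $M$, proving both assertions.

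The step I expect to be the main obstacle is the careful verification at the reduction boundary. One must check, case by case across $S^{(1a)},S^{(2a)},S^{(2b)},S^{(2c)}$, that the non-forced arcs of both $E$ and $E^a$ really are supported on the index set surviving $\alpha$ --- this is where the accounting of the unique arc out of index $1$ matters, since it is precisely that commitment which guarantees the remainders avoid $k_1,k_2$ and map cleanly to length $n-1$. A secondary point is confirming at each lift that the hypotheses of Lemma \ref{lunbreak} hold: that the two short arcs produced by a break are genuinely present-or-absent as the definition requires and that their middle vertex is marked on both sides of the correspondence. Both rely on the vanishing condition $s_i=0$ on $I(p)$ that the reduction of Theorem \ref{tmain} already establishes for the states in question, so the difficulty is one of organization rather than of a genuinely new idea.
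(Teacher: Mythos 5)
Your inductive step rests on a claim that is false: that every justifying pair set $E^a$ contains the forced arcs $F$ and that $E^a\setminus F$ has no arc incident to $k_1$ or $k_2$. Take $n=4$ and $t=(4,3,2,1)$, so $k_1=2$, $k_2=3$, $F=\emptyset$, the placement solution is $E=\{(1,4)\}$, and $M=\{2,3\}$. The set $E^a=\{(1,2),(2,4)\}$ justifies $t$, and both of its arcs are incident to $k_1=2$, so it cannot be transported by $\alpha$, whose domain excludes index $2$. Worse, this $E^a$ arises precisely by breaking $(1,4)$ over the marked vertex $2=k_1$, so your ``inertness'' argument (that $k_1,k_2$ can never be middle vertices because arcs leaving $k_1,k_2$ are forbidden) contradicts the very mechanism the theorem describes: whenever $k_1$ or $k_2$ is marked, breaks over it produce legitimate justifying sets routed through $k_1,k_2$. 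The forced-arc claim fails too: for $n=5$ and $t=(5,3,4,2,1)$ (so $k_1=3$), the set $\{(2,3),(1,5)\}$ justifies $t$ but does not contain the ``forced'' arc $(1,3)$; arcs ending at $k_1$ are only pinned down if one additionally assumes no arc leaves $k_1$, which arbitrary justifying sets need not satisfy. So the objects you want to apply the inductive hypothesis to simply do not descend to length $n-1$ under $\alpha$, and the final sentence of your proposal (that this is ``organization rather than a genuinely new idea'') is exactly where the proof breaks.

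This obstruction is why the paper's proof does not take your route. For $k_1=2$ it first performs \emph{unbreaking} operations on $E^a$ (the case $(1,3)\in E^a$) to eliminate arcs that obstruct transport, applies induction to the cleaned-up set, and only then reinstates the corresponding breaks via Lemma \ref{lunbreak}. For $k_1>2$ it abandons the top-value peel-off entirely: it lets $l$ be the first step at which the construction returns to an unbroken state, splits the indices into $A=\{1,\dots,l+1\}$ and $B=\{l+2,\dots,n\}$, shows by comparing $\sum_{i\in A}t_i$ with $\sum_{i\in A}(\lambda^n_i+\omega^n_i)$ that \emph{every} justifying pair set has exactly one crossing arc from $A$ to $B$, runs the induction separately on an $A$-side and a $B$-side sub-construction, and then merges the two break sequences with special bookkeeping whenever a break consumes or produces the crossing arc. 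Any repair of your argument would have to rebuild one of these devices — either a terminating, reversible pre-processing by unbreaks that clears $k_1,k_2$, or the two-block crossing-arc decomposition — so what is missing is a genuine idea, not bookkeeping.
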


\begin{proof}
We take cases based on the index $k \in [n]$ for which $t(k)=n-1$. Suppose $k=2$.  Let $\alpha:[n] \to [n-1]$ be defined as $\alpha(1)=1$ and $\alpha(i)=i-1$ if $i>2$. 

Let $t'$ be the integer $n-1$-tuple with $t_1=n-1$ and in $\alpha$ correspondence with $t$. We know from Lemma \ref{linduct} that $t'$ is justified by $E' \subseteq E_{n-1}$, the pair set in $\alpha$ correspondence with $E$. 

 Suppose $(1,3) \notin E^a$. Let $E^b=\{(\alpha(i),\alpha(j))|(i,j) \in E^a\}$. 
We also have shown previously that there are no pairs in any pair set justifying $t$ that start from 2. The pairs $(i,j)$ in $E^a$ with $i,j>1$ correspond with pairs $(\alpha(i), \alpha(j)) \in E_{n-1}$, which have their lengths preserved by the $\alpha$-correspondence. The pair $(1,a)$ with $a >3$ is the single pair in $E^a$ that starts at index 1. so it corresponds with $(\alpha(1), \alpha(a)) = (1,a-1) \in E_{n-1}$ with length at least 2. Therefore $E^b \subset E_{n-1}$ and is in $\alpha$-correspondence with $E^a$ and justifies $t'$. So we may apply the  inductive hypothesis to $E^b$ to show it is the result of unbreaking arcs from $E'$, and then apply Lemma \ref{lunbreak} to show $E^a$ is the result of unbreaking arcs from $E$.

If $(1,3) \in E^a$, then note that $t_3 \leq n-2 = \lambda^n_3 + \omega^n_3$  so there must be some arc $(3,a) \in E^a$. Note also that a justifying pair set for $t$ can only contain one arc that starts from index 1, so the arc $(1,a)$ is not in $E^a$. Therefore, we may unbreak $(1,3)$ and $(3,a)$ in $E^a$ and apply the previous case to the pair that results.

If $t_k=n-1$ for $k>2$, then consider the states $S^{(0)}, S^{(1)}, \dots, S^{(n)}$ that occur in the sequence of the 2-hook construction when it finds $t$. When $n-1$ is placed on $k$ on the initial state $S^{(0)}$ on $n$, the new state $S^{(1)}$ is broken. Suppose that first state in the process to be unbroken again is state $S^{(l)}$ from placing the value $n-l$ on index $j$ on state $S^{(l-1)}$ for some $j \in [k], l \in [n-1]$. Following the rules for where values can be placed when the state is broken, we can see that the values $n-1,n-3,\dots,n-l+1$ were placed on the indices $\{2,3,\dots,l+1\}-\{j\}$ in some order before the state $S^{(l)}$. After the state $S^{(l)}$, we must place $n-l-1\dots,1$ on $l+2,\dots,1$ in some order to complete the 2-hook construction. This lets us know what value $t_i$ can have on the sets $A=\{1,\dots,l+1\}$ and  $B=\{l+2,\dots,n\}$.

The sum  $\sum_{i \in A}  \lambda^n_i+\omega^n_i$ is $\frac{(2n-l)(l+1)}{2}+1$ while the sum $\sum_{i \in A} t_i$ is $\frac{(2n-l)(l+1)}{2}$. Similarly, the sum  $\sum_{i \in B} \lambda^n_i+\omega^n_i$ is $\frac{(n-l)(n-l-1)}{2}-1$ while the sum of $\sum_{i \in B} t_i$ is $\frac{(n-l)(n-l-1)}{2}$. Therefore any pair set justifying $t$ has exactly one arc from an index in $A$ to an index in  $B$, which we will call the \emph{crossing arc}. Let $(a,b)$ for $a \in A$  $b \in B$ be the crossing arc in $E$, and let $(a',b')$ for $a' \in A$, $b' \in B$ for $E^a$. We also define $E^a_A=\{(i,j): (i,j) \in E, i,j \in A\}$, $E^a_B=\{(i,j): (i,j) \in E, i,j \in B\}$. 

Let $k_1, k_2,\dots,k_l$ be the first $l$ indices placed on by the 2-hook construction to obtain state $S^{(l)}$. Let $S_A^{(0)}$ be the initial state on $l+2$. Let $\beta: \mathbb{Z} \to \mathbb{Z}$ be defined as $\beta(i)=i-(n-l-1)$. As we place value $n-i$ on index $k_i$ of state $S^{(i-1)}$ to get state $S_i$, we may place value $\beta(n-i)$ on index $k_i$ of state $S_A^{(i-1)}$ to get state $S_A^{(i)}$, until we reach $S_A^{(l)}$. States $S_A^{(0)}$ and $S^{(0)}$ technically do not have an $\alpha$ correspondence for any $\alpha$ but we have shown that the 2-hook construction always makes legal placements and finds a 2-hook permutation with justifying pair set, and it's trivial that we are following the $2$-hook construction on for the placements made on states $S_A^{(i-1)}, i \in [l]$ in parallel with the placement made on states $S^{(i-1)}, i \in [l]$. The sequence of placements on $S_A^{(0)}$ can also be seen to  have the same status of marked or unmarked as in $S^{(0)}$  and therefore the same pairs will be added to $E^{S_A^{(i-1)}}$ as in  $E^{S^{(i-1)}}$. Once we place the value $1$ on index $l+2$ of $S_A^{(l)}$, the 2-hook construction completes and finds integer $l+1$-tuple $t^A$ with placement solution $E^A$. $E^A-E^{S_A^{(l)}}$ is a single arc $(a,l+2)$. $E^{S_A^{(l)}}$ and $E^{S^{(l)}}$ are in $id:[l+1]\to [l+1]$ correspondence.

Let $S_B^{(0)}$ the initial configuration on $n-l-1$. Let $\alpha:\{a\} \cup \{l+2,\dots,n\}\to[n-l-1]$ be defined as $\alpha(a)=1$ and $\alpha(i)=i-(n-l-1)$ for $i \neq a$. Then $S_B^{(0)}$ is in $\alpha$-correspondence with $S^{(l)}$, and so we may complete $S_B^{(0)}$ into a 2-hook permutation $t^B$ that is in $\alpha$ correspondence with $t$ and whose placement solution $E^B$ is in $\alpha$-correspondence with $E-E^{S^{(l)}}$. 

We define $E^b_A=E^a_A \cup \{a',l+2\}$. We define $E^b_B=\{(\alpha(i),\alpha(j)): (i,j) \in E^a_B  \} \cup \{1,b'\}$. It is clear that $E^b_A$ justifies $t^A$ and $E^b_B$ justifies $E^b_B$. The induction hypothesis then applies to $E^b_A$ and $E^b_B$, and there is a sequence of arc breaks on each pair set that gives us $E_A$ and $E_B$. We now show the arc breaks can be transferred to $E^a$ to become a series of arc unbreaks that gives us $E$ and finish the induction.

Let $(x^A_1,y^A_1),$ $(x^A_2,y^A_2),\dots$ be the sequence of arc pairs we unbreak in $E^b_A$ and let $(x^B_1,y^B_1),$ $(x^B_2,y^B_2),\dots$ be the sequence of arc pairs we unbreak in $E^b_B$. For each $(x^A_i,y^A_i)$, if $y^A_i$ ends in $l+1$, unbreak the arc $x^A_i$ in $E^b_A$ with the crossing arc in $E^a$ to get a new crossing arc. Otherwise unbreak $x^A_i$ in $E^b_A$ with $y^A_i$ in $E^b_A$. Similarly, for each $(x^B_i,y^B_i)$, if $x^B_i$ starts with $1$, unbreak the crossing arc in $E^a$ with the arc that maps through $\alpha$ to $y^B_i$ to get a new crossing arc. Otherwise unbreak the arc that maps through $\alpha$ to $x^B_i$ in $E^b_B$ with the arc that maps through $\alpha$ to $y^B_i$ in $E^b_B$. We have shown this will eventually become $E$, so we are done showing that every possible $E^a$ that justifies $t$ is found through breaking arcs of the placement solution $E$. 

To show we may only consider arc breaks over $M$, note that if an index of $t$ was unmarked, no arc of the placement solution of $t$, $E$, could ever be broken over it. To see this, just consider that if $(a,b)$ crosses over unmarked vertex $c$ then $(a,c)$ must exist already.
\end{proof}

Note that even for hook shapes other than 2-hooks, there is no unique solution that can reach every other justifying pair set by arc unbreaks. For instance, consider the 3-hook shape $\lambda=$ $(3,1,1,1,0,0,0)$. Then there are multiple pair sets that can be added to $(8,5,4,3,1,0)$ to get $(6,3,5,4,2,1)$ include $\{(1,3),(1,4),(2,5),(2,6)\}$, $\{(1,3),(2,4),(1,5),(2,6)\}$, and $\{(1,3),(2,4),$ $(2,5),(1,6)\}$. So the strong 2-hook construction probably has no analog in this case or others.

For the case of the 2-hook partition, we finally can state our combinatorial description of $\overrightarrow{D}_{\lambda^2_n, E(G)}$ by using Lemma \ref{lctformula}, Proposition \ref{mainprop}, and Theorem \ref{tunbreak}.

\begin{theorem}\label{ttruemain}
Given the 2-hook $\lambda^n$ and $G$ a path with edges on $n$ vertices, $\overrightarrow{D}_{\lambda,G}$ evaluates to:

\[
\sum_{t \in \mathcal{N}_{\lambda^n, E_n}} \left( \sum_{E \in \mathcal{E}_t \cap \mathcal{E}(G) } (-1)^{|E|} \right) s_{l(t)}
\]

\end{theorem}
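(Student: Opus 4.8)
The plan is to assemble the theorem from the pieces already established, since the genuine combinatorial work lives in Proposition~\ref{mainprop} and Theorem~\ref{tunbreak} rather than here. I would begin from equation~\eqref{eqcoeff}, which already expresses $\overrightarrow{D}_{\lambda^n,G}(x)$ as $\sum_{t\in\mathcal{N}_{\lambda^n,E_n-E(G)}} c_t\, s_{l(t)}$ (grouping terms with a common $l(t)$ collects the coefficient of each Schur function), the coefficient $c_t$ being a signed count of justifying pair sets contained in $E_n-E(G)$. The task then reduces to two rewrites: putting $c_t$ into the claimed inner form, and replacing the outer index set $\mathcal{N}_{\lambda^n,E_n-E(G)}$ by $\mathcal{N}_{\lambda^n,E_n}$.

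For the coefficient, I would apply Lemma~\ref{lctformula}, which gives $c_t=\sum_{E}(-1)^{|E|}$ over all $E\subseteq E_n-E(G)$ with $\lambda^n+\omega^n+v(E)=t$. By the definitions of Section~4, the set $\mathcal{E}_t$ collects all justifying pair sets for $t$ while $\mathcal{E}(G)$ is the power set of $E_n-E(G)$; hence the summation range $\{E\subseteq E_n-E(G):\lambda^n+\omega^n+v(E)=t\}$ is exactly $\mathcal{E}_t\cap\mathcal{E}(G)$, yielding $c_t=\sum_{E\in\mathcal{E}_t\cap\mathcal{E}(G)}(-1)^{|E|}$. Here Theorem~\ref{tunbreak} is what makes this range genuinely usable: it shows every member of $\mathcal{E}_t$ is reachable from the placement solution by arc breaks over marked indices, so $\mathcal{E}_t$, and with it $\mathcal{E}_t\cap\mathcal{E}(G)$, is explicitly constructible rather than merely abstractly defined.

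For the index set, I would enlarge the outer sum from $\mathcal{N}_{\lambda^n,E_n-E(G)}$ to $\mathcal{N}_{\lambda^n,E_n}$, using the containment $\mathcal{N}_{\lambda^n,E_n-E(G)}\subseteq\mathcal{N}_{\lambda^n,E_n}$ noted in the introduction. The point to verify is that this adds nothing: any $t$ in the difference still has distinct coordinates (being in $\mathcal{N}_{\lambda^n,E_n}$), but by definition of $\mathcal{N}_{\lambda^n,E_n-E(G)}$ it admits no justifying pair set inside $E_n-E(G)$, so $\mathcal{E}_t\cap\mathcal{E}(G)=\emptyset$ and its coefficient is an empty signed sum equal to $0$. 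Proposition~\ref{mainprop} together with Corollary~\ref{l2hform} then describes the enlarged index set concretely as the $2$-hook permutations together with the tuple $\lambda^n+\omega^n$, so the outer sum is explicit. As a consistency check I would confirm the two surviving Schur shapes: every $2$-hook permutation satisfies $l(t)=(1^n)$, while the single tuple $\lambda^n+\omega^n$ gives $l(t)=(2,1^{n-2})$, matching the $E=\emptyset$ contribution.

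The only genuinely new verification in this theorem is the index-set swap and the vanishing of the extra coefficients; everything else is substitution of earlier results. I therefore expect the main obstacle to be purely notational: keeping straight that $\mathcal{E}_t$ ranges over all of $E_n$ while intersecting with $\mathcal{E}(G)$ enforces avoidance of $E(G)$, and that membership in $\mathcal{N}_{\lambda^n,E_n}$ already supplies the distinctness of coordinates that makes each $s_{l(t)}$ a well-defined, nonvanishing Schur function.
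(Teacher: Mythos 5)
Your proposal is correct and follows essentially the same route as the paper, which states Theorem~\ref{ttruemain} as a direct assembly of equation~\eqref{eqcoeff}, Lemma~\ref{lctformula}, Proposition~\ref{mainprop}, and Theorem~\ref{tunbreak} without writing out further argument. Your only addition—checking that enlarging the outer index set from $\mathcal{N}_{\lambda^n,E_n-E(G)}$ to $\mathcal{N}_{\lambda^n,E_n}$ contributes only vanishing coefficients, since such $t$ have $\mathcal{E}_t\cap\mathcal{E}(G)=\emptyset$ (or a signed sum equal to zero)—is exactly the bookkeeping the paper leaves implicit, so there is no substantive difference.
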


\section{Enumeration}

It might be questioned how many elements there actually are in $\mathcal{N}_{\lambda^n,E_n}$ and how it compares to $(n-2)!$. It turns out they can be counted exactly.

\begin{lemma}
$N_n=a(n-3)$ where $a(n)$ is given by $a(0)=1$,$a(1)=2$, $a(n)=4 a(n-1) - 2 a(n-2)$ (A006012 from OEIS)
\end{lemma}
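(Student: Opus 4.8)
The plan is to turn $N_n = |\mathcal{N}_{\lambda^n,E_n}|$ into a pure enumeration of the legal placement sequences of the 2-hook construction, using Proposition \ref{mainprop} to identify the 2-hook permutations with the outputs of that construction. The key structural observation is that, as the reformulation through $x_i,x_o,y_i$ already suggests, the filled set $I(p)$ of a partial construction is always a prefix interval $\{1,\dots,a\}$ together with at most one further interval $\{b,\dots,c\}$, and index $n$ is placed on only at the very last step. Hence, for counting purposes, a partial construction is determined by just two statistics: when unbroken, the number $j$ of still-empty indices in $\{a+1,\dots,n-1\}$; when broken, the gap size $s=b-1-a\ge 1$ together with the right-tail size $r=n-1-c\ge 0$. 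The actual sizes of the two filled blocks are irrelevant, since every allowed move touches only the three boundary indices.

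First I would define $f(j)$ to be the number of legal completions of the construction starting from an unbroken state with $j$ empty tail indices, and $g(s,r)$ the number of legal completions from a broken state with gap $s$ and right tail $r$; then $N_n=f(n-2)$, because the initial state is unbroken with tail $\{2,\dots,n-1\}$, and $f(0)=1$. Reading off the moves available in each state (extend the prefix, break anywhere, shrink the gap from either side, or extend the second block to the right) gives the recursions
\begin{align*}
f(j) &= f(j-1) + \sum_{s=1}^{j-1} g(s,\,j-1-s),\\
g(s,r) &= 2\,g(s-1,r) + g(s,r-1)\quad(s\ge 2),\\
g(1,r) &= f(r) + g(1,r-1),
\end{align*}
with the conventions $g(s,-1)=0$ and $g(1,0)=f(0)$. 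The factor $2$ records that for $s\ge 2$ either end of the gap may be filled, while for $s=1$ filling the single gap index merges the two blocks and contributes $f(r)$; the $r=0$ edge enforces that index $n$ cannot be used early.

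The main step, and the part I expect to be the real obstacle, is collapsing the two-parameter family $g(s,r)$ into a single recursion for $f$. I would handle this with generating functions. Writing $g_s(y)=\sum_r g(s,r)y^r$ and $F(y)=\sum_j f(j)y^j$, the last two recursions give $g_1(y)=F(y)/(1-y)$ and $g_s(y)=\tfrac{2}{1-y}g_{s-1}(y)$, so that $\Phi(x,y):=\sum_{s\ge 1}g_s(y)x^s = xF(y)/(1-y-2x)$. The sum appearing in the recursion for $f$ is exactly the diagonal coefficient of $\Phi$, and specializing $x=y=z$ yields $\Phi(z,z)=zF(z)/(1-3z)$. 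Substituting back produces the single functional equation $F(y)\bigl(1-y-\tfrac{y^2}{1-3y}\bigr)=1$, that is,
\[
F(y)=\frac{1-3y}{1-4y+2y^2}.
\]

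Finally I would compare with the generating function $\frac{1-2y}{1-4y+2y^2}$ of the sequence $a(n)$. Since
\[
F(y)=1+y\cdot\frac{1-2y}{1-4y+2y^2},
\]
we obtain $f(j)=a(j-1)$ for every $j\ge 1$, and therefore $N_n=f(n-2)=a(n-3)$. Equivalently, clearing denominators in the formula for $F$ shows $f(j)=4f(j-1)-2f(j-2)$ for $j\ge 3$ with $f(1)=1$, $f(2)=2$, which is precisely the recurrence defining $a$ shifted by one index. The only genuinely delicate points in carrying this out are the boundary bookkeeping around $s=1$ (merging) and $r=0$ (index $n$ reserved), and confirming that the launching state is the unbroken state with tail of size $n-2$; everything else is the routine generating-function manipulation sketched above.
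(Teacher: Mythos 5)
Your proof is correct. Like the paper, you begin by identifying $N_n$ with the number of legal placement sequences of the 2-hook construction (via Proposition \ref{mainprop} and Theorem \ref{tmain}), and your state recursions for $f(j)$ and $g(s,r)$ faithfully encode the placement rules, including the two boundary subtleties (merging when $s=1$, and index $n$ being unavailable until the final step); I checked that they reproduce $N_3=1$, $N_4=2$, $N_5=6$. Where you differ from the paper is in how the count is extracted. The paper does a first-return decomposition: it conditions on the first moment a broken state becomes unbroken again, counts the broken phase in closed form by a binomial argument, and obtains the convolution $N_n = N_{n-1}+\sum_i 3^i N_{n-2-i}$, which it then rearranges into $N_n=4N_{n-1}-2N_{n-2}$. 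You instead retain the full two-parameter state space and let generating functions do the collapsing; the factor $1/(1-3z)$ in your $\Phi(z,z)=zF(z)/(1-3z)$ is exactly the paper's $\sum_i 3^i$, so the two computations are equivalent, but yours derives that power-of-3 structure rather than asserting it. This difference matters: the hand bookkeeping of the broken phase (choosing which placements extend the second block rightward versus fill the gap from one of its two ends, with the merge forced to come last) is precisely where the paper is shaky --- its stated per-$k$ count $2^{k-2}\binom{l-2}{l-k}$ and phase total $3^{l-2}$ are off by an index shift (the correct values are $2^{k-3}\binom{l-3}{l-k}$ and $3^{l-3}$), though the convolution it finally writes down is right --- whereas your generating-function route never exposes you to those off-by-one choices, at the cost of being less transparently combinatorial and leaving the ``broken phases are counted by powers of $3$'' identity implicit in a rational function.
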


Let $N_n = |\mathcal{N}_{\lambda^n,E_n}|$. Note $N_3=1$, $N_4=2$. $N_n$ counts how many sequences of placements there are on the initial state $S^{(0)}$ of length $n$. We use this to form a recurrence. 

If we place $n-1$ on index $2$ in $S^{(0)}$, we have already seen that there are $N_{n-1}$ ways to finish the construction into a $2$-hook permutation from that state. Suppose instead that the placement of $n-1$ is on $k \in \{2,\dots,n-1\}$ on $S^{(0)}$, giving us a broken state. Suppose we continue the construction from there and the value $n-l+1$ is the last value placed before we obtain an unbroken state. Then the proof of Theorem \ref{tunbreak} shows that we have placed on indices $2,\dots, l$ and that $k-2$ values were placed on indices $\{3,\dots, k-1\}$. So for the values $n-2,n-3,\dots,n-l+2$, exactly $l-k$ values must have been placements on their state's $y_i$ index and the other $k-2$ of the values must each have been placed on their state's $x_o$ or $x_i$ index. So we have in this particular case $2^{k-2} \binom{l-2}{l-k} N_{n-l}$ ways to finish. If we fix $l$ and sum over $k$ from $3$ to $l$, we end up with $3^{l-2}$. Therefore
\[
N_n = N_{n-1} + \sum_{i \in \{0,\dots, n-1\}} 3^i N_{n-2-i}
\]

Which can be re arranged to the known recurrence above.

\end{document}